\documentclass[11pt]{article}

\usepackage[a4paper,margin=1in]{geometry}
\usepackage{amsmath,amssymb,amsthm,mathtools}
\usepackage{aliascnt}
\usepackage{microtype}
\usepackage{enumitem}
\usepackage[numbers,sort&compress]{natbib}
\usepackage[colorlinks=true,linkcolor=blue,citecolor=blue,urlcolor=blue]{hyperref}

\allowdisplaybreaks
\newcommand{\R}{\mathbb{R}}
\newcommand{\N}{\mathbb{N}}
\renewcommand{\P}{\mathcal{P}}
\newcommand{\M}{\mathcal{M}}
\newcommand{\A}{\mathcal{A}}
\newcommand{\OT}{\mathsf{OT}}
\newcommand{\BC}{\mathsf{BC}}
\newcommand{\CCOT}{\mathsf{CCOT}}
\newcommand{\CCBC}{\mathsf{CCBC}}
\newcommand{\EOT}{\mathsf{EOT}}
\newcommand{\EBC}{\mathsf{EBC}}
\newcommand{\ECCOT}{\mathsf{ECCOT}}
\newcommand{\ECCBC}{\mathsf{ECCBC}}
\newcommand{\KL}{\operatorname{KL}}
\newcommand{\spt}{\operatorname{spt}}
\renewcommand{\d}{\,\mathrm{d}}
\newcommand{\pos}[1]{\left[#1\right]_{+}}
\DeclareMathOperator{\Proj}{Proj}

\theoremstyle{plain}
\newtheorem{theorem}{Theorem}[section]

\newaliascnt{lemma}{theorem}
\newtheorem{lemma}[lemma]{Lemma}
\aliascntresetthe{lemma}

\newaliascnt{proposition}{theorem}
\newtheorem{proposition}[proposition]{Proposition}
\aliascntresetthe{proposition}

\newaliascnt{corollary}{theorem}
\newtheorem{corollary}[corollary]{Corollary}
\aliascntresetthe{corollary}

\theoremstyle{definition}
\newaliascnt{definition}{theorem}
\newtheorem{definition}[definition]{Definition}
\aliascntresetthe{definition}

\newaliascnt{assumption}{theorem}
\newtheorem{assumption}[assumption]{Assumption}
\aliascntresetthe{assumption}

\theoremstyle{remark}
\newaliascnt{remark}{theorem}
\newtheorem{remark}[remark]{Remark}
\aliascntresetthe{remark}

\newaliascnt{example}{theorem}

\aliascntresetthe{example}

\usepackage[nameinlink,noabbrev]{cleveref}
\crefname{assumption}{Assumption}{Assumptions}
\Crefname{assumption}{Assumption}{Assumptions}
\crefname{definition}{Definition}{Definitions}
\Crefname{definition}{Definition}{Definitions}
\crefname{remark}{Remark}{Remarks}
\Crefname{remark}{Remark}{Remarks}
\crefname{example}{Example}{Examples}
\Crefname{example}{Example}{Examples}

\title{Capacity-Constrained Wasserstein Barycenters:\\
Existence, Duality, and Entropic Regularization}
\author{Chamila Malagoda Gamage}
\date{}
\hypersetup{
  pdftitle={Capacity-Constrained Wasserstein Barycenters: Existence, Duality, and Entropic Regularization},
  pdfauthor={Chamila Malagoda Gamage},
  pdfsubject={Capacity-constrained Wasserstein barycenters and entropy-regularized duality},
  pdfkeywords={optimal transport, Wasserstein barycenter, capacity constraint, entropy regularization, convex duality}
}

\begin{document}
\maketitle

\begin{abstract}
We introduce a capacity-constrained Wasserstein barycenter problem in
which each transport plan from an input measure to the barycenter is
bounded by a prescribed capacity measure.  On compact domains, we
prove existence of capacity-constrained barycenters and establish a
strong duality formula.  We then study an entropy-regularized version
under bounded capacity-density assumptions.  The regularized problem
has a unique optimal tuple of transport plans and a unique barycenter,
while its dual involves an explicit capped-exponential penalty.
Whenever dual maximizers exist, the optimal densities satisfy a
capacity-clipped Gibbs formula.  Finally, we prove an
$O(\epsilon)$ estimate for the optimal values and show that the
regularized plans converge to the minimum-entropy optimal solution of
the unregularized problem.
\end{abstract}

\noindent\textbf{Keywords.}
Optimal transport; Wasserstein barycenter; capacity constraints; entropy regularization; convex duality.

\section{Introduction}\label{sec:introduction}

The Wasserstein barycenter provides a nonlinear interpolation and averaging operation for probability measures.  Given probability measures $\nu_1,\ldots,\nu_p$ and positive weights $\lambda_1,\ldots,\lambda_p$ with $\sum_i\lambda_i=1$, the classical quadratic barycenter minimizes a weighted sum of transport costs from the input measures to a common probability measure; see \cite{agueh2011barycenters}.  Wasserstein barycenters have become useful in probability, statistics, imaging, economics, and machine learning, but the classical formulation permits every coupling with the prescribed marginals.

In many transport systems, not every coupling is admissible.  A road, communication channel, storage facility, matching mechanism, or production network may impose an upper bound on the mass that can pass through a portion of the product space.  Capacity-constrained optimal transport models this restriction by requiring the transport plan to be dominated by a prescribed finite measure.  The two-marginal theory, including structural properties and duality, is developed in \cite{rachev1998mass,korman2015optimal,korman2015dual}.  Other notions of constrained barycenters have been considered for image priors and related computational purposes \cite{simon2020barycenters}, but these constraints act on the barycenter itself and differ from the pairwise capacity restrictions considered here.  The phrase ``capacity-constrained Wasserstein barycenter'' has also appeared in semi-discrete computational geometry, where prescribed masses are assigned to the cells of a centroidal power diagram \cite{xin2016centroidal}.  That cell-capacity model is distinct from the present measure-theoretic formulation, in which each full coupling is required to satisfy a domination constraint on the product space.

The main object of this paper is the \emph{capacity-constrained barycenter} problem.  A separate capacity is imposed on each coupling between an input measure and the unknown barycenter.  Thus the admissible barycenter is not merely required to belong to a prescribed set: it must be simultaneously reachable from every input measure through a coupling that respects the corresponding transport capacity.

Our first contribution is a systematic primal and dual theory for this model.  We characterize the feasible set through the two-marginal capacity criterion, show that the feasible barycenter set and the objective are convex, identify the support restriction imposed by the capacities, prove weak compactness and existence, and obtain strong duality.  The dual variables have a direct interpretation: the usual transport potentials enforce the marginal constraints, while nonpositive capacity potentials encode the shadow price of saturating the available transport capacity.

Our second contribution is an entropy-regularized theory that retains these same capacity constraints.  Following the continuous regularized-barycenter viewpoint of \cite{li2020continuous}, we introduce a fixed support measure $\eta$ for the unknown barycenter.  Relative to the product reference $\nu_i\otimes\eta$, the $i$th capacity is represented by a bounded density $h_i$.  Entropy regularization then acts on densities $q_i$ satisfying the hard upper bound $0\le q_i\le h_i$.  The resulting regularized barycenter is unique, and strong value duality follows from a compact minimax formulation.  The key dual quantity is the capped-exponential conjugate
\[
\Phi_{\epsilon,h}(s)
=\sup_{0\le q\le h}\bigl\{sq-\epsilon(q\log q-q)\bigr\}.
\]
It coincides with the usual exponential conjugate before the capacity becomes active and becomes affine after saturation.  Whenever the dual supremum is attained, the corresponding optimal density satisfies
\[
q_i(x,y)
=\min\left\{h_i(x,y),
\exp\!\left(\frac{\phi_i(x)+\psi_i(y)-c_i(x,y)}{\epsilon}\right)\right\}.
\]
This is a capacity-clipped analogue of the Gibbs relation in entropy-regularized optimal transport.  We also prove that the regularized values converge at order $O(\epsilon)$ and that the full family of regularized optimal plans converges to the unique entropy-minimizing unregularized optimal tuple.

Entropy regularization of optimal transport underlies Sinkhorn-type algorithms \cite{cuturi2013sinkhorn,peyre2019computational}.  Continuous entropic duality is studied in \cite{clason2021entropic}, while numerical regularizations and Sinkhorn-type methods for constrained transport are developed in \cite{wu2023double,tang2024sinkhorn}.  Recent work on regularized barycenters includes \cite{chizat2025doubly}.  The present paper connects these two lines of research by regularizing the capacity-constrained barycenter itself and retaining the capacity multipliers in the dual problem.

The paper is organized as follows.  \Cref{sec:preliminaries} recalls classical and capacity-constrained optimal transport, together with their dual formulations.  \Cref{sec:ccbc} first recalls the classical Wasserstein barycenter problem and its dual, and then develops the primal and dual theory of capacity-constrained barycenters.  \Cref{sec:entropic} reviews classical entropy-regularized transport and fixed-support regularized barycenters before introducing entropy-regularized capacity-constrained transport and barycenters.  \Cref{sec:zero-limit} proves convergence and entropy selection as $\epsilon\downarrow0$.  An auxiliary moment estimate on $\R^d$ is included in \Cref{sec:appendix-moments}.

\section{Classical and capacity-constrained optimal transport}
\label{sec:preliminaries}

Let $K\subset\R^d$ be compact.  We denote by $\P(K)$ the set of Borel
probability measures on $K$, by $\M_+(K)$ the set of finite nonnegative Borel
measures on $K$, and by $C(K)$ the space of real-valued continuous functions
on $K$.  Similarly, $C(K\times K)$ denotes the space of real-valued continuous
functions on $K\times K$.  These function spaces are equipped with the uniform
norm.  Weak convergence of measures is denoted by $\rightharpoonup$.  For
$\gamma,\widetilde\gamma\in\M_+(K\times K)$, the notation
$\gamma\le\widetilde\gamma$ means that
$\widetilde\gamma-\gamma$ is a nonnegative measure.

Let $\mu,\nu\in\P(K)$, and let
$\pi_1,\pi_2:K\times K\to K$ denote the coordinate projections,
\[
\pi_1(x,y)=x,
\qquad
\pi_2(x,y)=y.
\]

\begin{definition}[Transport plans and classical optimal transport]
\label{def:classical-ot}
A probability measure $\gamma\in\P(K\times K)$ is called a
\emph{transport plan}, or equivalently a \emph{coupling}, from $\mu$ to $\nu$
if
\[
(\pi_1)_\#\gamma=\mu,
\qquad
(\pi_2)_\#\gamma=\nu,
\]
where $T_\#\rho$ denotes the push-forward of a measure $\rho$ by a Borel map
$T$.  The set of all transport plans from $\mu$ to $\nu$ is denoted by
\begin{equation}\label{eq:transport-plans}
\Pi(\mu,\nu)
:=
\left\{
\gamma\in\P(K\times K):
(\pi_1)_\#\gamma=\mu,
\ (\pi_2)_\#\gamma=\nu
\right\}.
\end{equation}
The two equalities in \eqref{eq:transport-plans} are called the
\emph{marginal conditions}.  Equivalently,
\begin{equation}\label{eq:marginal-conditions}
\gamma(A\times K)=\mu(A),
\qquad
\gamma(K\times B)=\nu(B),
\end{equation}
for every pair of Borel sets $A,B\subset K$.

For a continuous cost $c:K\times K\to\R$, the classical Kantorovich optimal
transport value is
\begin{equation}\label{eq:classical-ot}
\OT_c(\mu,\nu)
:=
\inf_{\gamma\in\Pi(\mu,\nu)}
\int_{K\times K}c(x,y)\d\gamma(x,y).
\end{equation}
Any minimizer is called an \emph{optimal transport plan}.
\end{definition}

The set $\Pi(\mu,\nu)$ is nonempty because
$\mu\otimes\nu\in\Pi(\mu,\nu)$.  It is weakly compact, and hence the infimum
in \eqref{eq:classical-ot} is attained.

For later comparison with the capacity-constrained problem, define the
classical Kantorovich dual value by
\begin{align}\label{eq:classical-ot-dual}
\OT_c^*(\mu,\nu)
:=
\sup\Bigg\{\,
&\int_Ku\d\mu+\int_Kv\d\nu:\nonumber\\
&u,v\in C(K),\qquad
u(x)+v(y)\le c(x,y)
\quad\text{for all }(x,y)\in K\times K
\,\Bigg\}.
\end{align}

\begin{theorem}[Classical Kantorovich duality]
\label{thm:classical-ot-duality}
For every continuous cost $c:K\times K\to\R$,
\begin{equation}\label{eq:classical-ot-strong-duality}
\OT_c(\mu,\nu)=\OT_c^*(\mu,\nu).
\end{equation}
Moreover, the supremum in \eqref{eq:classical-ot-dual} is attained.  A
maximizing pair is called a pair of \emph{Kantorovich potentials}.
\end{theorem}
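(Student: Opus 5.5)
The plan is to prove weak duality directly, obtain strong duality from a Fenchel--Rockafellar (or minimax) argument on the dual pair $C(K\times K)$ and $\M(K\times K)$, and get attainment of the dual supremum via $c$-concave regularization and Arzel\`a--Ascoli.

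\emph{Weak duality.} For admissible $u,v$ and any $\gamma\in\Pi(\mu,\nu)$, the marginal conditions \eqref{eq:marginal-conditions} give
\[
\int_K u\d\mu+\int_K v\d\nu=\int_{K\times K}\bigl(u(x)+v(y)\bigr)\d\gamma\le\int_{K\times K}c\d\gamma .
\]
Hence $\OT_c^*(\mu,\nu)\le\OT_c(\mu,\nu)$.

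\emph{Strong duality.} I will write the primal as $\inf_{\gamma\ge0}\sup_{u,v\in C(K)}L(\gamma,u,v)$, where
\[
L(\gamma,u,v)=\int c\d\gamma+\int u\d\mu+\int v\d\nu-\int\bigl(u(x)+v(y)\bigr)\d\gamma .
\]
The inner supremum is $+\infty$ unless $\gamma\in\Pi(\mu,\nu)$. Nonnegativity of $\gamma$ fixes total mass $1$ once the marginals are imposed, so I may restrict $\gamma$ to the weakly compact convex set of measures of mass at most some fixed $M$; the constraint is then enforced by the sup. Since $L$ is bilinear and weakly continuous in $\gamma$, Sion's minimax theorem lets me swap $\inf$ and $\sup$. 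I then evaluate $\inf_{\gamma}$ of $\int(c-u\oplus v)\d\gamma$: it is $0$ if $u\oplus v\le c$, and roughly $M\cdot\min(c-u\oplus v)$ otherwise. To make this exact I plan to use the alternative Fenchel--Rockafellar route on $E=C(K\times K)$. Take $\Theta(\varphi)=0$ if $\varphi\ge -c$ and $+\infty$ otherwise, and $\Xi(\varphi)=\int u\d\mu+\int v\d\nu$ if $\varphi=u\oplus v$ and $+\infty$ otherwise; $\Xi$ is well defined up to constants, consistently. At $\varphi\equiv1$, $\Theta$ is continuous and both functionals are finite, so the qualification condition holds. Computing the conjugates on $E^*=\M(K\times K)$ (Riesz) gives exactly $\inf_{\gamma\ge0,\ \text{marginals }\mu,\nu}\int c\d\gamma$, which yields equality with $\OT_c^*$. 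I expect the sign bookkeeping in these conjugates to be routine.

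\emph{Attainment.} This is the main obstacle, since maximizing sequences need not be bounded or equicontinuous. Given admissible $(u,v)$, I will replace $v$ by $u^c(y)=\min_x\{c(x,y)-u(x)\}$ and then $u$ by $u^{cc}$. Each replacement keeps the pair admissible and does not decrease the objective. Uniform continuity of $c$ on the compact set $K\times K$ gives both functions a common modulus of continuity $\omega_c$. Adding a constant to one potential and subtracting it from the other, I normalize $\min u=0$; together with $|c|\le\|c\|_\infty$ this gives uniform bounds on $u$ and $v$. Arzel\`a--Ascoli then supplies a uniformly convergent subsequence. The constraint $u\oplus v\le c$ and the objective both pass to uniform limits, so the limit pair is a maximizer.
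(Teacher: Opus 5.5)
Your proposal is correct, and it does more than the paper: the paper proves only weak duality, using exactly your integration argument, and cites Villani for both the reverse inequality and attainment of the dual supremum. Your route is the standard textbook proof. Strong duality comes from Fenchel--Rockafellar on $C(K\times K)$, with $\M(K\times K)$ as the dual space by Riesz; compactness of $K$ spares you the finitely additive measures needed on $C_b$. Attainment then comes from $c$-transform regularization and Arzel\`a--Ascoli. What you gain is self-containment, at the cost of length.

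There is one slip. $\varphi\equiv1$ is a valid qualification point only when $c>-1$ uniformly (Villani assumes $c\ge0$). For an arbitrary continuous $c$, use $\varphi\equiv1+\|c\|_\infty$ instead, or shift $c$ by a constant, which moves $\OT_c$ and $\OT_c^*$ by the same amount.

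Also, the Sion route you set aside closes exactly. For $M\ge1$, the infimum of $\int(c-u\oplus v)\d\gamma$ over $\{\gamma\ge0:\gamma(K\times K)\le M\}$ equals $-Mt$, where $t:=\max\{0,\max_{K\times K}(u\oplus v-c)\}$. The pair $(u-t,v)$ is admissible with value $\int u\d\mu+\int v\d\nu-t\ge\int u\d\mu+\int v\d\nu-Mt$, so the swapped problem equals $\OT_c^*$. Meanwhile $\Pi(\mu,\nu)$ lies in the mass-$M$ set, so the unswapped problem equals $\OT_c$. This version uses the same Sion-based device as the paper's proofs of \Cref{thm:ccbc-duality} and \Cref{thm:eccbc-duality}, so it would sit more naturally in this paper.
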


The weak-duality inequality follows by integrating
$u(x)+v(y)\le c(x,y)$ against any $\gamma\in\Pi(\mu,\nu)$.  The reverse
inequality and dual attainment are the classical Kantorovich duality theorem;
see \cite{villani2009optimal}.

The capacity-constrained formulation is obtained by restricting the classical
admissible set $\Pi(\mu,\nu)$ to transport plans dominated by a prescribed
finite measure.  Let $\widetilde\gamma\in\M_+(K\times K)$.

\begin{definition}[Capacity-constrained couplings]
\label{def:capacity-couplings}
The set of transport plans from $\mu$ to $\nu$ bounded by
$\widetilde\gamma$ is
\begin{equation}\label{eq:capacity-couplings}
\Pi^{\widetilde\gamma}(\mu,\nu)
:=\bigl\{\gamma\in\Pi(\mu,\nu):\gamma\le\widetilde\gamma\bigr\}.
\end{equation}
For a continuous cost $c:K\times K\to\R$, the capacity-constrained transport
value is
\begin{equation}\label{eq:ccot-primal}
\CCOT_c^{\widetilde\gamma}(\mu,\nu)
:=\inf_{\gamma\in\Pi^{\widetilde\gamma}(\mu,\nu)}
\int_{K\times K}c(x,y)\d\gamma(x,y),
\end{equation}
with the convention that the value is $+\infty$ if the admissible set is
empty.
\end{definition}

The following criterion describes when the capacity and marginal constraints
are compatible.

\begin{theorem}[Capacity feasibility criterion]
\label{thm:capacity-feasibility}
Let $\mu,\nu\in\P(K)$ and
$\widetilde\gamma\in\M_+(K\times K)$.  Then
$\Pi^{\widetilde\gamma}(\mu,\nu)\neq\varnothing$ if and only if
\begin{equation}\label{eq:capacity-feasibility}
\mu(A)+\nu(B)
\le \widetilde\gamma(A\times B)+1
\qquad
\text{for all Borel sets }A,B\subset K.
\end{equation}
\end{theorem}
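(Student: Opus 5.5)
The necessity direction is immediate, and for sufficiency I plan to use a duality argument in the spirit of \Cref{thm:classical-ot-duality}, reducing the infinite-dimensional feasibility question to a minimax over continuous test functions.

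\emph{Necessity.} If $\gamma\in\Pi^{\widetilde\gamma}(\mu,\nu)$, then for Borel sets $A,B$,
\[
\mu(A)+\nu(B)=\gamma(A\times K)+\gamma(K\times B)
=\gamma(A\times B)+\gamma\bigl((A\times K)\cup(K\times B)\bigr)\le\widetilde\gamma(A\times B)+1,
\]
since $\gamma\le\widetilde\gamma$ and $\gamma$ is a probability measure.

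\emph{Sufficiency.} Consider the convex set $\mathcal{C}:=\{\gamma\in\M_+(K\times K):\gamma\le\widetilde\gamma\}$, which is weakly compact because its total masses are bounded by $\widetilde\gamma(K\times K)$ and the inequality $\int f\,d\gamma\le\int f\,d\widetilde\gamma$ for $f\ge0$ continuous passes to weak limits. Infeasibility means that the compact convex set $\{((\pi_1)_\#\gamma,(\pi_2)_\#\gamma):\gamma\in\mathcal C\}$ does not contain $(\mu,\nu)$. Hahn--Banach separation in $\M(K)^2$ with the weak topology, whose dual is $C(K)^2$, then yields $u,v\in C(K)$ with
\[
\int u\d\mu+\int v\d\nu>\sup_{\gamma\in\mathcal C}\int(u(x)+v(y))\d\gamma=\int\pos{u(x)+v(y)}\d\widetilde\gamma(x,y).
\]
We must show this contradicts \eqref{eq:capacity-feasibility}. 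The functional $L(u,v):=\int u\d\mu+\int v\d\nu-\int\pos{u\oplus v}\d\widetilde\gamma$ is positively homogeneous. We may shift $u\mapsto u+t$ and $v\mapsto v-t$, and we want to reduce to indicator-type functions via a layer-cake (coarea) decomposition. Write $u=\int_0^\infty(1_{\{u>s\}}-\cdots)\,ds$; more cleanly, the statement that $L\le0$ on all pairs should follow from the extreme-ray structure. Concretely, I will use the identity
\[
\pos{a+b}=\int_{\R}\bigl(1_{\{a>s\}}+1_{\{b>-s\}}-1\bigr)_+\,ds,
\]
which holds pointwise because for indicators $\alpha,\beta\in\{0,1\}$ one has $(\alpha+\beta-1)_+=\alpha\beta$, and $\int 1_{a>s}1_{b>-s}\,ds=\pos{a+b}$. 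Combining this with $u=\int(1_{u>s}-1_{s<0})\,ds$ and the analogous formula for $v$, we get
\[
L(u,v)=\int_{\R}\Bigl(\mu(A_s)+\nu(B_s)-1-\widetilde\gamma(A_s\times B_s)\Bigr)\,ds,
\]
where $A_s=\{u>s\}$ and $B_s=\{v>-s\}$; the constants from the $1_{s<0}$ terms must be checked to combine into the $-1$ exactly. Each integrand is $\le0$ by \eqref{eq:capacity-feasibility}, so $L\le0$, which is the desired contradiction.

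The main obstacle is the bookkeeping in this layer-cake identity. The integrands are not individually integrable over $\R$; only their combination is, since for large $|s|$ either $A_s=\varnothing$ and $B_s=K$ (giving $0+1-1-0=0$) or $A_s=K$ and $B_s=\varnothing$ (giving $0$). I will therefore carry out the computation on a bounded interval $[-M,M]$ containing the ranges of $u$ and $-v$. An alternative, if separation in the weak topology turns out to be delicate, is to discretize $K$ into finite partitions, apply the finite max-flow/min-cut (Gale--Hoffman) theorem to get feasible plans on each partition, and pass to a weak limit using the compactness of $\mathcal C$.
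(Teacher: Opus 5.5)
Your proof is correct, and it differs from the paper's treatment: the paper gives no argument of its own and simply quotes the compact-space case of Rachev--R\"uschendorf \cite[Corollary~4.6.15]{rachev1998mass}.

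You instead give a self-contained duality proof, and it closes.
\begin{itemize}
\item The set $\{\gamma\in\M_+(K\times K):\gamma\le\widetilde\gamma\}$ is weak-* compact. Hence its image under the marginal map is compact and convex.
\item Hahn--Banach separation in $\M(K)^2$ with $\sigma(\M(K)^2,C(K)^2)$ produces $u,v\in C(K)$ with $\int u\d\mu+\int v\d\nu>\int\pos{u(x)+v(y)}\d\widetilde\gamma$. Here you use the weak-* topology, not the Banach weak topology; your remark that the dual is $C(K)^2$ pins this down.
\item Working on $[-M,M]$ with $M\ge\max\{\|u\|_\infty,\|v\|_\infty\}$, the layer-cake identities turn the difference of the two sides into $\int_{-M}^{M}\bigl(\mu(A_s)+\nu(B_s)-1-\widetilde\gamma(A_s\times B_s)\bigr)\,ds\le0$.
\end{itemize}
The bookkeeping you left open does work out. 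Use $v=\int_{-M}^{M}(1_{\{v>-s\}}-1_{\{s>0\}})\,ds$, matching $B_s=\{v>-s\}$. Then the constant terms are $-\mu(K)1_{\{s<0\}}-\nu(K)1_{\{s>0\}}=-1$ for $s\neq0$. This is exactly where $\mu,\nu\in\P(K)$ enters. Fubini is justified because $(x,y,s)\mapsto1_{\{u(x)>s\}}1_{\{v(y)>-s\}}$ is Borel on $K\times K\times[-M,M]$. The remarks about positive homogeneity and shifting $(u,v)$ are not needed.

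Comparing the two routes:
\begin{itemize}
\item Your route is elementary and uses only mechanisms already present in the paper. The identity $\sup_{\gamma\le\widetilde\gamma}\int(u\oplus v)\d\gamma=\int\pos{u\oplus v}\d\widetilde\gamma$ is the positive-part penalty of \Cref{thm:ccbc-duality} with $c=0$. The separation step mirrors the indicator argument there.
\item Your route also shows that testing \eqref{eq:capacity-feasibility} on open sets (superlevel sets of continuous functions) already suffices.
\item The citation buys generality beyond the compact setting. Off compact sets, your compactness of the marginal image is unavailable, and one needs tightness or a Strassen/Kellerer-type duality instead. That is the situation relevant to the noncompact extension sketched in the appendix.
\end{itemize}
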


This is the compact-space form of
\cite[Corollary~4.6.15]{rachev1998mass}.

The dual of the capacity-constrained transport problem extends the classical
Kantorovich dual by introducing a nonpositive multiplier for the domination
constraint $\gamma\le\widetilde\gamma$.  Define
\begin{align}\label{eq:ccot-dual}
\CCOT_c^{\widetilde\gamma,*}(\mu,\nu)
:=\sup\Bigg\{\,
&\int_Ku\d\mu+\int_Kv\d\nu
 +\int_{K\times K}w\d\widetilde\gamma:\nonumber\\
&u,v\in C(K),\quad w\in C(K\times K),\quad w\le0,\nonumber\\
&u(x)+v(y)+w(x,y)\le c(x,y)
  \quad\text{for all }(x,y)\in K\times K
\,\Bigg\}.
\end{align}

\begin{theorem}[Two-marginal capacity-constrained duality]
\label{thm:ccot-duality}
For every continuous cost $c:K\times K\to\R$,
\begin{equation}\label{eq:ccot-strong-duality}
\CCOT_c^{\widetilde\gamma}(\mu,\nu)
=\CCOT_c^{\widetilde\gamma,*}(\mu,\nu).
\end{equation}
\end{theorem}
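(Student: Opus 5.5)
We prove the two inequalities separately. Weak duality comes from integration against an admissible plan. The reverse inequality comes from a Fenchel--Rockafellar (or minimax) argument on the compact space $K\times K$, with a separate treatment of the infeasible case, where the primal value is $+\infty$.

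\emph{Weak duality.} Let $\gamma\in\Pi^{\widetilde\gamma}(\mu,\nu)$ and let $(u,v,w)$ be dual admissible. Integrating $u(x)+v(y)+w(x,y)\le c(x,y)$ against $\gamma$ and using the marginal conditions gives
\[
\int_K u\d\mu+\int_K v\d\nu+\int_{K\times K} w\d\gamma\le\int_{K\times K} c\d\gamma .
\]
Since $w\le 0$ and $\gamma\le\widetilde\gamma$, we have $\int w\d\widetilde\gamma\le\int w\d\gamma$. Hence $\CCOT^{\widetilde\gamma,*}_c\le\CCOT^{\widetilde\gamma}_c$, and this holds trivially when the admissible set is empty.

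\emph{Strong duality.} Work in the Banach space $E=C(K)\times C(K)\times C(K\times K)$, whose dual consists of triples of finite signed measures. Write the dual problem as $\sup_{(u,v,w)\in E}\{-F(u,v,w)-G(A(u,v,w))\}$, where:
\begin{itemize}[nosep]
\item $A(u,v,w)=u\oplus v+w\in C(K\times K)$;
\item $F(u,v,w)=-\int u\d\mu-\int v\d\nu-\int w\d\widetilde\gamma$ plus the indicator of $\{w\le0\}$;
\item $G(f)$ is the indicator of $\{f\le c\}$.
\end{itemize}
There is a qualification point: take $u=v=-1-\|c\|_\infty$ and $w=0$. Then $F$ is finite there and $G$ is continuous at $A(u,v,w)$, because $A(u,v,w)$ lies strictly below $c$ in sup norm. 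Fenchel--Rockafellar duality therefore gives equality with $\min_{\gamma\in\M(K\times K)}\{F^*(-A^*\gamma)+G^*(\gamma)\}$, and the minimum is attained.

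We then compute the conjugates. $G^*(\gamma)=\int c\d\gamma$ if $\gamma\ge0$ and $+\infty$ otherwise. $F^*(-A^*\gamma)$ is finite, and then equal to $0$, exactly when three conditions hold:
\begin{itemize}[nosep]
\item $(\pi_1)_\#\gamma=\mu$, forced by the supremum over $u$;
\item $(\pi_2)_\#\gamma=\nu$, forced by the supremum over $v$;
\item $\int w\d(\widetilde\gamma-\gamma)\le 0$ for every $w\le0$, i.e.\ $\gamma\le\widetilde\gamma$, forced by the supremum over $w\le0$.
\end{itemize}
The dual of the dual is thus exactly $\CCOT^{\widetilde\gamma}_c(\mu,\nu)$. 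When $\Pi^{\widetilde\gamma}(\mu,\nu)=\varnothing$, the conjugate problem is $+\infty$ and the same identity shows the dual value is $+\infty$. One can also see this directly: Hahn--Banach separation of the closed convex set $\{(\gamma_1,\gamma_2,\gamma-\widetilde\gamma)\}$ from $(\mu,\nu,\cdot)$ yields a certificate $(u,v,w)$ that can be scaled without bound.

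\emph{Main obstacle.} The delicate step is the correct sign and space bookkeeping in Fenchel--Rockafellar: checking the continuity condition, and verifying that the constraint $w\le0$ in the $\sup$ produces precisely $\gamma\le\widetilde\gamma$ via the Riesz representation on compact $K\times K$. Care is also needed in the infeasible case, so that the equality is read in $(-\infty,+\infty]$.

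If this route proves awkward, the alternative is a direct minimax argument. Form the Lagrangian
\[
L(\gamma;u,v,w)=\int(c-u\oplus v-w)\d\gamma+\int u\d\mu+\int v\d\nu+\int w\d\widetilde\gamma
\]
over nonnegative $\gamma$ with $\gamma(K\times K)\le R$, which is a weak-* compact convex set. Apply Sion's theorem, then let $R\to\infty$. The bound $\gamma(K\times K)\le\widetilde\gamma(K\times K)$, which every primal-admissible $\gamma$ satisfies, controls the truncation.
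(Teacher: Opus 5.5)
Your Fenchel--Rockafellar argument is correct, and it differs from the paper, which gives no proof of this theorem: it simply invokes the capacity-constrained Kantorovich duality of Rachev--R\"uschendorf, with Korman--McCann as further references.

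Your bookkeeping checks out:
\begin{itemize}
\item The triple $(-1-\|c\|_\infty,-1-\|c\|_\infty,0)$ lies in the domain of $F$, and $A$ maps it into the sup-norm interior of $\{f\le c\}$.
\item $G^*(\gamma)=\int c\d\gamma$ for $\gamma\ge0$ and $+\infty$ otherwise.
\item $F^*(-A^*\gamma)$ is the indicator of $\{(\pi_1)_\#\gamma=\mu,\ (\pi_2)_\#\gamma=\nu,\ \gamma\le\widetilde\gamma\}$, so the dual of the dual is exactly the capacity-constrained primal.
\end{itemize}
In the infeasible case, Fenchel--Rockafellar itself forces $\inf\{F+G\circ A\}=-\infty$. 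If that infimum were finite, the maximum on the measure side would be finite and attained at some $\gamma$ with finite conjugate values, i.e.\ an admissible plan. So both sides equal $+\infty$, and your Hahn--Banach aside is unnecessary.

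What each route buys:
\begin{itemize}
\item Your route gives a self-contained proof in exactly the paper's normalization: continuous $u,v,w$, the sign condition $w\le0$, and the $+\infty$ convention. Primal attainment comes as a by-product.
\item The citation gives generality and access to finer results, such as dual attainment. Your argument does not address those, but the statement does not claim them either.
\end{itemize}

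It is also worth contrasting your proof with the paper's own proof of the barycentric analogue, \Cref{thm:ccbc-duality}. There the capacity is built into the primal domain by writing $\gamma=q\widetilde\gamma$ with $0\le q\le1$, and compactness of this order interval drives Sion's theorem. The dual first appears in positive-part form, and the multiplier is recovered afterwards as $w=-\pos{u\oplus v-c}$ (\Cref{cor:ccbc-capacity-multiplier}). You instead dualize $\gamma\le\widetilde\gamma$ directly, so $w$ is a genuine Lagrange multiplier and compactness is replaced by an interior-point qualification.

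One correction to your fallback minimax. The truncation $\gamma(K\times K)\le R$ is harmless on the primal side for every $R\ge1$, simply because admissible plans are probability measures; $\widetilde\gamma(K\times K)$ plays no role. The step that actually needs justification is on the dual side. There the inner minimum produces the soft penalty $R\min\{0,\min(c-u\oplus v-w)\}$ instead of the hard constraint. You remove it by lowering $u$ by the amount of violation, which costs only $\mu(K)=1$ times that amount. So $R=1$ already suffices, and no limit $R\to\infty$ is needed.
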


The result follows from the capacity-constrained Kantorovich duality of Rachev
and R\"uschendorf \cite{rachev1998mass}; see also
\cite{korman2015dual,korman2015optimal}.

\begin{remark}[Sign of the capacity potential]
\label{rem:capacity-sign}
The condition $w\le0$ is forced by the upper-bound constraint.  Indeed, if
$\gamma\le\widetilde\gamma$, then
$\int w\d\widetilde\gamma\le\int w\d\gamma$ for every nonpositive $w$.
Thus $w$ lowers the dual value precisely on regions in which the prescribed
capacity may become active.
\end{remark}

\section{The capacity-constrained barycenter problem}
\label{sec:ccbc}

Throughout this section, let $p\ge2$, let
$\nu_1,\ldots,\nu_p\in\P(K)$, and let
$\lambda_1,\ldots,\lambda_p>0$ satisfy
$\sum_{i=1}^p\lambda_i=1$.  For each $i$, let
$\widetilde\gamma_i\in\M_+(K\times K)$ be a prescribed capacity.  We take
\begin{equation}\label{eq:quadratic-cost}
c_i(x,y):=\frac12\lvert x-y\rvert^2,
\end{equation}
but the compact-space arguments below apply to arbitrary continuous costs.

Before imposing capacity constraints, we briefly recall the classical
Wasserstein barycenter problem.  It may be viewed as a nonlinear interpolation
among several probability measures; see \cite{agueh2011barycenters}.

\begin{definition}[Classical Wasserstein barycenter]
\label{def:classical-barycenter}
The classical Wasserstein barycenter value associated with the measures
$\nu_1,\ldots,\nu_p$ and weights $\lambda_1,\ldots,\lambda_p$ is
\begin{equation}\label{eq:classical-barycenter}
\BC
:=\inf_{\nu\in\P(K)}J_0(\nu),
\qquad
J_0(\nu)
:=\sum_{i=1}^p\lambda_i\OT_{c_i}(\nu_i,\nu).
\end{equation}
Equivalently,
\begin{equation}\label{eq:classical-barycenter-expanded}
\BC
=
\inf_{\nu\in\P(K)}
\sum_{i=1}^p\lambda_i
\inf_{\gamma_i\in\Pi(\nu_i,\nu)}
\int_{K\times K}\frac12\lvert x-y\rvert^2\d\gamma_i(x,y).
\end{equation}
Any minimizer is called a \emph{Wasserstein barycenter} of
$\nu_1,\ldots,\nu_p$ with weights $\lambda_1,\ldots,\lambda_p$.
\end{definition}

Since $K$ is compact and the costs $c_i$ are continuous, the classical
barycenter problem admits at least one minimizer.  In this formulation, every
transport plan satisfying the prescribed marginal conditions is admissible.
The capacity-constrained problem introduced below replaces each set
$\Pi(\nu_i,\nu)$ by $\Pi^{\widetilde\gamma_i}(\nu_i,\nu)$.  Consequently, the
common measure $\nu$ must be simultaneously reachable from every input measure
through transport plans satisfying their respective capacity constraints.

\subsection{Primal problem and elementary properties}\label{subsec:ccbc-primal}

\begin{definition}[Capacity-constrained barycenter]\label{def:ccbc}
The feasible barycenter set is
\begin{equation}\label{eq:feasible-barycenters}
\P_{\widetilde\gamma}
:=\left\{\nu\in\P(K):
\Pi^{\widetilde\gamma_i}(\nu_i,\nu)\neq\varnothing
\text{ for every }i=1,\ldots,p\right\}.
\end{equation}
For $\nu\in\P(K)$, define
\begin{equation}\label{eq:pairwise-cc-cost}
\widetilde W_i^2(\nu_i,\nu)
:=\CCOT_{c_i}^{\widetilde\gamma_i}(\nu_i,\nu).
\end{equation}
The capacity-constrained barycenter value is
\begin{equation}\label{eq:ccbc-primal}
\CCBC
:=\inf_{\nu\in\P_{\widetilde\gamma}}
J(\nu),
\qquad
J(\nu):=\sum_{i=1}^p\lambda_i\widetilde W_i^2(\nu_i,\nu).
\end{equation}
Any minimizer is called a capacity-constrained barycenter of
$\nu_1,\ldots,\nu_p$ associated with the capacities
$\widetilde\gamma_1,\ldots,\widetilde\gamma_p$.
\end{definition}

We assume from now on that $\P_{\widetilde\gamma}\neq\varnothing$.  This is a compatibility condition on the family of capacities.  By \Cref{thm:capacity-feasibility}, it may be checked pairwise for a candidate barycenter.

\begin{remark}[A simple feasible family]\label{rem:simple-feasible-family}
The nonempty condition is not restrictive in principle.  For example, if $\xi\in\P(K)$ and
$\widetilde\gamma_i=\nu_i\otimes\xi$, then $\xi\in\P_{\widetilde\gamma}$ because
$\nu_i\otimes\xi\in\Pi^{\widetilde\gamma_i}(\nu_i,\xi)$.
\end{remark}

\begin{proposition}[Support, convexity, and monotonicity]\label{prop:basic-properties}
The capacity-constrained barycenter problem has the following properties.
\begin{enumerate}[label=\textup{(\roman*)}]
\item Every $\nu\in\P_{\widetilde\gamma}$ satisfies
\begin{equation}\label{eq:support-control}
\spt\nu\subset
\bigcap_{i=1}^p
\overline{\Proj_y(\spt\widetilde\gamma_i)}.
\end{equation}
\item The set $\P_{\widetilde\gamma}$ is convex.
\item For each $i$, the map
$\nu\mapsto\widetilde W_i^2(\nu_i,\nu)$ is convex on
$\P_{\widetilde\gamma}$; hence $J$ is convex.
\item If $\widetilde\gamma_i\le\widehat\gamma_i$ for every $i$, then
\begin{equation}\label{eq:capacity-monotonicity}
\P_{\widetilde\gamma}\subset\P_{\widehat\gamma}
\qquad\text{and}\qquad
\CCBC(\widehat\gamma_1,\ldots,\widehat\gamma_p)
\le
\CCBC(\widetilde\gamma_1,\ldots,\widetilde\gamma_p).
\end{equation}
\end{enumerate}
\end{proposition}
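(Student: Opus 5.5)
The plan is to prove the four items directly from the definitions. Each item reduces to a short measure-theoretic observation about dominated couplings, so \Cref{thm:capacity-feasibility} and \Cref{thm:ccot-duality} are not needed except as cross-checks.

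For (i), fix $\nu\in\P_{\widetilde\gamma}$ and an index $i$, and choose $\gamma_i\in\Pi^{\widetilde\gamma_i}(\nu_i,\nu)$. Since $\gamma_i\le\widetilde\gamma_i$, every $\widetilde\gamma_i$-null set is $\gamma_i$-null, so $\spt\gamma_i\subset\spt\widetilde\gamma_i$. Set $F_i:=\overline{\Proj_y(\spt\widetilde\gamma_i)}$. Because $K$ is compact, $\spt\widetilde\gamma_i$ is compact, so its projection is already closed; the closure is harmless either way. We then compute
\[
\nu(K\setminus F_i)=\gamma_i\bigl(K\times(K\setminus F_i)\bigr)\le\widetilde\gamma_i\bigl(K\times(K\setminus F_i)\bigr)=0,
\]
where the last equality holds because $K\times(K\setminus F_i)$ is disjoint from $\spt\widetilde\gamma_i$, and the complement of the support is $\widetilde\gamma_i$-null (the support is well defined on the separable space $K\times K$). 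Since $F_i$ is closed and carries full $\nu$-mass, $\spt\nu\subset F_i$. Intersecting over $i$ gives \eqref{eq:support-control}.

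For (ii) and (iii), take $\nu,\nu'\in\P_{\widetilde\gamma}$, $t\in[0,1]$, and admissible plans $\gamma_i\in\Pi^{\widetilde\gamma_i}(\nu_i,\nu)$ and $\gamma_i'\in\Pi^{\widetilde\gamma_i}(\nu_i,\nu')$. The mixture $t\gamma_i+(1-t)\gamma_i'$ has first marginal $\nu_i$ and second marginal $t\nu+(1-t)\nu'$, because push-forward is linear. It is also dominated by $t\widetilde\gamma_i+(1-t)\widetilde\gamma_i=\widetilde\gamma_i$, which gives (ii). For (iii), the cost integral is linear in the plan, so
\[
\widetilde W_i^2(\nu_i,t\nu+(1-t)\nu')\le t\int c_i\d\gamma_i+(1-t)\int c_i\d\gamma_i'.
\]
Taking the infimum over $\gamma_i$ and $\gamma_i'$ yields convexity. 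All values are finite on $\P_{\widetilde\gamma}$ because $c_i$ is bounded on the compact set $K\times K$. Convexity of $J$ then follows because the weights $\lambda_i$ are positive.

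For (iv), if $\gamma\le\widetilde\gamma_i\le\widehat\gamma_i$, then $\Pi^{\widetilde\gamma_i}(\nu_i,\nu)\subset\Pi^{\widehat\gamma_i}(\nu_i,\nu)$. This gives the inclusion of feasible sets and the pointwise inequality $\widehat J\le J$ on $\P_{\widetilde\gamma}$. Taking infima over the smaller set $\P_{\widetilde\gamma}$ and then over the larger set $\P_{\widehat\gamma}$ gives \eqref{eq:capacity-monotonicity}.

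None of these steps is deep. The only point requiring care is the support argument in (i): it relies on the fact that a dominated measure has support inside the support of the dominating measure, which I would justify through the nullity of the complement of the support.
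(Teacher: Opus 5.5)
Your proposal is correct and follows the paper's proof essentially step for step. Item (i) uses the second marginal of a dominated plan, items (ii)--(iii) use mixtures of admissible (near-)optimal plans together with linearity of the cost, and item (iv) uses the enlargement of the admissible sets; your version of (i) just adds the details the paper leaves implicit, namely that the complement of $\spt\widetilde\gamma_i$ is null and that the projection is already closed.
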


\begin{proof}
If $\gamma_i\in\Pi^{\widetilde\gamma_i}(\nu_i,\nu)$, then the second marginal of $\gamma_i$ is supported on the closure of the second projection of $\spt\widetilde\gamma_i$, proving \eqref{eq:support-control}.

Let $\nu^0,\nu^1\in\P_{\widetilde\gamma}$ and $t\in[0,1]$.  Choose
$\gamma_i^k\in\Pi^{\widetilde\gamma_i}(\nu_i,\nu^k)$ for $k=0,1$.  Then
$t\gamma_i^0+(1-t)\gamma_i^1$ has first marginal $\nu_i$, second marginal
$t\nu^0+(1-t)\nu^1$, and is still dominated by $\widetilde\gamma_i$.  This proves convexity of the feasible set.  Taking $\gamma_i^0$ and $\gamma_i^1$ optimal, or arbitrarily close to optimal, and using linearity of the cost gives convexity of each constrained cost and of $J$.

Finally, increasing a capacity can only enlarge the corresponding admissible set of plans.  The inclusion of feasible barycenter sets and the inequality of optimal values follow immediately.
\end{proof}

\begin{lemma}[Weak lower semicontinuity]\label{lem:cc-lsc}
For each $i$, the map
$\nu\mapsto\widetilde W_i^2(\nu_i,\nu)$ is weakly lower semicontinuous on $\P(K)$, with the value $+\infty$ outside its feasible domain.
\end{lemma}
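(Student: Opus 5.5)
The plan is a direct sequential-compactness argument on $K\times K$. Let $\nu^n\rightharpoonup\nu$ in $\P(K)$. We must show
\[
\widetilde W_i^2(\nu_i,\nu)\le\liminf_{n\to\infty}\widetilde W_i^2(\nu_i,\nu^n).
\]
If the right-hand side is $+\infty$ there is nothing to prove. Otherwise, pass to a subsequence (not relabeled) along which the $\liminf$ is a limit and every $\nu^n$ is feasible, that is, $\Pi^{\widetilde\gamma_i}(\nu_i,\nu^n)\neq\varnothing$. For each $n$, choose $\gamma^n\in\Pi^{\widetilde\gamma_i}(\nu_i,\nu^n)$ with
\[
\int c_i\d\gamma^n\le\widetilde W_i^2(\nu_i,\nu^n)+\tfrac1n.
\]
(Exact minimizers also exist by the same compactness argument, but near-minimizers suffice.)

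Since $K\times K$ is compact, $\P(K\times K)$ is weakly compact by Prokhorov. Extract a further subsequence with $\gamma^n\rightharpoonup\gamma\in\P(K\times K)$. The key steps are then to verify the following.

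\begin{enumerate}[label=(\alph*)]
\item The limit $\gamma$ has the correct marginals. The projections $\pi_1,\pi_2$ are continuous, so push-forwards commute with weak limits: $(\pi_1)_\#\gamma=\nu_i$ and $(\pi_2)_\#\gamma=\lim_n\nu^n=\nu$.
\item The capacity bound survives in the limit: $\gamma\le\widetilde\gamma_i$.
\item The cost passes to the limit: since $c_i$ is continuous and bounded on $K\times K$, $\int c_i\d\gamma=\lim_n\int c_i\d\gamma^n$.
\end{enumerate}

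Together these give $\gamma\in\Pi^{\widetilde\gamma_i}(\nu_i,\nu)$. In particular $\nu$ is feasible, and
\[
\widetilde W_i^2(\nu_i,\nu)\le\int c_i\d\gamma=\lim_n\int c_i\d\gamma^n\le\liminf_n\widetilde W_i^2(\nu_i,\nu^n).
\]
The same argument shows that the feasible domain $\{\nu:\Pi^{\widetilde\gamma_i}(\nu_i,\nu)\neq\varnothing\}$ is weakly closed, which is consistent with the $+\infty$ convention.

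The only step that needs real care is (b), the stability of the domination constraint under weak convergence. I would prove it by testing against nonnegative continuous functions. For every $f\in C(K\times K)$ with $f\ge0$ we have $\int f\d\gamma^n\le\int f\d\widetilde\gamma_i$, and letting $n\to\infty$ gives $\int f\d\gamma\le\int f\d\widetilde\gamma_i$. Hence the signed measure $\widetilde\gamma_i-\gamma$ is a finite Borel measure on a compact metric space that integrates every nonnegative continuous function to a nonnegative number.

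To conclude that $\widetilde\gamma_i-\gamma$ is a nonnegative measure, use regularity. For a closed set $F$, approximate $\mathbf 1_F$ from above by the continuous functions $f_k(z)=\max\{0,1-k\,\mathrm{dist}(z,F)\}$, which decrease to $\mathbf 1_F$. Dominated convergence then gives $\gamma(F)\le\widetilde\gamma_i(F)$. Outer regularity of finite Borel measures on metric spaces extends the inequality from closed sets to all Borel sets. Equivalently, one can invoke the Riesz representation theorem: a positive linear functional on $C(K\times K)$ is represented by a nonnegative measure.

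Remaining details are routine. The costs are finite, so the near-optimal choice of $\gamma^n$ is legitimate. The subsequence selection is harmless, because we first pass to a subsequence realizing the $\liminf$ and only then extract the weakly convergent one.
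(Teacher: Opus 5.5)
Your proposal is correct and follows essentially the same argument as the paper: pass to a subsequence realizing the $\liminf$, extract a weakly convergent subsequence of (near-)optimal plans, check that the marginal and domination constraints are preserved by testing against nonnegative continuous functions, and pass to the limit in the continuous cost. The differences are cosmetic. You use $1/n$-optimal plans where the paper uses exact minimizers, and you spell out, via regularity or Riesz representation, why $\int f\d\gamma\le\int f\d\widetilde\gamma_i$ for all nonnegative continuous $f$ implies $\gamma\le\widetilde\gamma_i$, a step the paper simply asserts.
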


\begin{proof}
Let $\nu_n\rightharpoonup\nu$ and suppose that
$\liminf_n\widetilde W_i^2(\nu_i,\nu_n)<\infty$.  Passing to a subsequence, we may assume that the liminf is a limit.  Choose optimal plans
$\gamma_{i,n}\in\Pi^{\widetilde\gamma_i}(\nu_i,\nu_n)$.  Since $K\times K$ is compact, a further subsequence converges weakly to some
$\gamma_i\in\P(K\times K)$.  The marginal constraints pass to the limit, so
$\gamma_i\in\Pi(\nu_i,\nu)$.

For every nonnegative $f\in C(K\times K)$,
\[
\int f\d\gamma_i
=\lim_n\int f\d\gamma_{i,n}
\le\int f\d\widetilde\gamma_i.
\]
Hence $\gamma_i\le\widetilde\gamma_i$.  Since $c_i$ is continuous,
\[
\widetilde W_i^2(\nu_i,\nu)
\le\int c_i\d\gamma_i
=\lim_n\int c_i\d\gamma_{i,n}
=\liminf_n\widetilde W_i^2(\nu_i,\nu_n).
\]
\end{proof}

\begin{proposition}[Compactness of the feasible set]\label{prop:feasible-compact}
The set $\P_{\widetilde\gamma}$ is weakly compact in $\P(K)$.
\end{proposition}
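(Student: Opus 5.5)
Since $K$ is compact, $\P(K)$ is weakly compact by Prokhorov's theorem (equivalently, Banach--Alaoglu together with the Riesz representation of $C(K)^*$), and the weak topology on $\P(K)$ is metrizable. It therefore suffices to show that $\P_{\widetilde\gamma}$ is weakly sequentially closed. So let $\nu_n\in\P_{\widetilde\gamma}$ with $\nu_n\rightharpoonup\nu$; the goal is $\nu\in\P_{\widetilde\gamma}$, that is, $\Pi^{\widetilde\gamma_i}(\nu_i,\nu)\neq\varnothing$ for each $i$.

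Fix $i$ and pick any $\gamma_{i,n}\in\Pi^{\widetilde\gamma_i}(\nu_i,\nu_n)$, which exists because $\nu_n$ is feasible. These are probability measures on the compact set $K\times K$, so a subsequence converges weakly to some $\gamma_i\in\P(K\times K)$. Push-forward under the continuous projections is weakly continuous, so $(\pi_1)_\#\gamma_i=\nu_i$ and $(\pi_2)_\#\gamma_i=\lim_n\nu_n=\nu$. Since the subsequence may be chosen separately for each $i$, no diagonal argument is needed.

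For the domination constraint, I will repeat the argument from \Cref{lem:cc-lsc}. For every nonnegative $f\in C(K\times K)$ we have $\int f\d\gamma_{i,n}\le\int f\d\widetilde\gamma_i$, and letting $n\to\infty$ along the subsequence gives $\int f\d\gamma_i\le\int f\d\widetilde\gamma_i$. The only point requiring care is passing from this inequality for continuous test functions to the inequality of measures $\gamma_i\le\widetilde\gamma_i$. This follows because both measures are finite Borel measures on a compact metric space, hence regular. For a closed set $F$, approximate $\mathbf 1_F$ from above by continuous functions $0\le f_k\le1$ decreasing to $\mathbf 1_F$ and apply monotone convergence to get $\gamma_i(F)\le\widetilde\gamma_i(F)$. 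Outer regularity then extends this to all Borel sets. Hence $\gamma_i\in\Pi^{\widetilde\gamma_i}(\nu_i,\nu)$, so $\nu\in\P_{\widetilde\gamma}$, and $\P_{\widetilde\gamma}$ is closed.

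As an alternative, one could apply \Cref{lem:cc-lsc} directly. The functional $\widetilde W_i^2(\nu_i,\cdot)$ is finite exactly on $\{\nu:\Pi^{\widetilde\gamma_i}(\nu_i,\nu)\neq\varnothing\}$, because $c_i$ is bounded on $K\times K$. Its sublevel set $\{\widetilde W_i^2\le\max_{K\times K}|c_i|\}$ is therefore exactly this feasibility set, and lower semicontinuity makes it closed. $\P_{\widetilde\gamma}$ is the intersection of these $p$ closed sets inside the compact space $\P(K)$, so it is compact. I expect no real obstacle; the only mildly delicate step is the measure-domination limit handled above.
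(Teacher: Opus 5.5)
Your proof is correct and follows the paper's argument: you show $\P_{\widetilde\gamma}$ is closed in the compact space $\P(K)$ by extracting weak limits of admissible plans, and the marginals and the domination $\gamma_i\le\widetilde\gamma_i$ pass to the limit exactly as in \Cref{lem:cc-lsc}. You are also right that the paper's diagonal extraction is unnecessary, since each $i$ may use its own subsequence. One small slip: once you have $\gamma_i(F)\le\widetilde\gamma_i(F)$ for closed $F$, the extension to all Borel sets uses \emph{inner} regularity of $\gamma_i$ by closed sets, or else first passing to open sets and then using outer regularity of $\widetilde\gamma_i$. More simply, apply the uniqueness in the Riesz representation theorem to the positive functional $f\mapsto\int f\d\widetilde\gamma_i-\int f\d\gamma_i$.
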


\begin{proof}
Since $K$ is compact, it is enough to prove that the feasible set is weakly closed.  Let
$\nu_n\in\P_{\widetilde\gamma}$ and $\nu_n\rightharpoonup\nu$.  For each $i$ and $n$, choose
$\gamma_{i,n}\in\Pi^{\widetilde\gamma_i}(\nu_i,\nu_n)$.  By compactness and a diagonal extraction, assume that
$\gamma_{i,n}\rightharpoonup\gamma_i$ for every $i$.  As in the proof of
\Cref{lem:cc-lsc}, the limiting plan belongs to
$\Pi^{\widetilde\gamma_i}(\nu_i,\nu)$.  Therefore $\nu\in\P_{\widetilde\gamma}$.
\end{proof}

\begin{theorem}[Existence of a capacity-constrained barycenter]\label{thm:ccbc-existence}
If $\P_{\widetilde\gamma}\neq\varnothing$, then the problem \eqref{eq:ccbc-primal} has a minimizer.
\end{theorem}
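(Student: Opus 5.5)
We will use the direct method of the calculus of variations on the weakly compact set $\P_{\widetilde\gamma}$, combining \Cref{prop:feasible-compact} with \Cref{lem:cc-lsc}.

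\emph{Finiteness of the value.} For every $\nu\in\P_{\widetilde\gamma}$ and every $i$, the set $\Pi^{\widetilde\gamma_i}(\nu_i,\nu)$ is nonempty, and $c_i$ is bounded on the compact set $K\times K$. Hence each $\widetilde W_i^2(\nu_i,\nu)$ is finite and lies in $[\min c_i,\max c_i]$. Since $\P_{\widetilde\gamma}\neq\varnothing$ by assumption, it follows that $\CCBC$ is a finite real number.

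\emph{Minimizing sequence.} Choose $\nu^n\in\P_{\widetilde\gamma}$ with $J(\nu^n)\to\CCBC$. By \Cref{prop:feasible-compact}, there is a subsequence, not relabeled, with $\nu^n\rightharpoonup\nu^\star$ for some $\nu^\star\in\P_{\widetilde\gamma}$.

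\emph{Lower semicontinuity.} By \Cref{lem:cc-lsc}, each map $\nu\mapsto\widetilde W_i^2(\nu_i,\nu)$ is weakly lower semicontinuous. The weights $\lambda_i$ are positive, so
\[
J(\nu^\star)\le\sum_{i=1}^p\lambda_i\liminf_n\widetilde W_i^2(\nu_i,\nu^n)\le\liminf_n J(\nu^n)=\CCBC .
\]
Since $\nu^\star$ is feasible, we also have $J(\nu^\star)\ge\CCBC$. Therefore $\nu^\star$ is a minimizer.

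\emph{Remarks on the argument.} A capacity-constrained barycenter need not be unique, since the problem is not strictly convex. Existence, however, requires only this argument. The one delicate point is that the inner infimum defining $\widetilde W_i^2$ is attained, which \Cref{lem:cc-lsc} uses when it selects optimal plans. Attainment holds because $\Pi^{\widetilde\gamma_i}(\nu_i,\nu)$ is weakly closed: the domination test against nonnegative continuous $f$ passes to weak limits, exactly as in that lemma. This set is therefore compact, and the cost $\gamma\mapsto\int c_i\d\gamma$ is weakly continuous, so the infimum is achieved. The lemma as stated already covers this, so there is no real obstacle.
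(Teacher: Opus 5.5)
Your proof is correct and follows the paper's argument: you take a minimizing sequence, extract a weakly convergent subsequence with feasible limit via \Cref{prop:feasible-compact}, and conclude with the weak lower semicontinuity from \Cref{lem:cc-lsc}. Your two additional checks are valid details that the paper leaves implicit, namely that $\CCBC$ is finite and that the inner infimum over $\Pi^{\widetilde\gamma_i}(\nu_i,\nu)$ is attained (which the lemma needs when it picks optimal plans).
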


\begin{proof}
Let $\{\nu_n\}\subset\P_{\widetilde\gamma}$ be a minimizing sequence.  By
\Cref{prop:feasible-compact}, a subsequence converges weakly to some
$\nu^*\in\P_{\widetilde\gamma}$.  By \Cref{lem:cc-lsc},
\[
J(\nu^*)
\le\liminf_{n\to\infty}J(\nu_n)
=\CCBC.
\]
Thus $\nu^*$ is a minimizer.
\end{proof}

\subsection{Strong duality}
\label{subsec:ccbc-duality}

The capacity-constrained dual is most transparent when compared with the dual
of the classical Wasserstein barycenter problem.  Define
\begin{equation}\label{eq:unregularized-potential-class}
\A_0
:=
\left\{
(\boldsymbol\phi,\boldsymbol\psi):
\begin{array}{l}
\phi_i,\psi_i\in C(K),\quad i=1,\ldots,p,\\[1mm]
\displaystyle\sum_{i=1}^p\lambda_i\psi_i=0
\quad\text{on }K
\end{array}
\right\}.
\end{equation}
The weighted zero-sum condition expresses the fact that all pairwise transport
problems have the same second marginal.

Define the classical barycenter admissible potential class by
\begin{equation}\label{eq:classical-barycenter-potential-class}
\A_{\mathrm{cl}}
:=
\left\{
(\boldsymbol\phi,\boldsymbol\psi)\in\A_0:
\phi_i(x)+\psi_i(y)\le c_i(x,y)
\ \text{for all }(x,y)\in K\times K,
\ i=1,\ldots,p
\right\}.
\end{equation}

\begin{theorem}[Classical Wasserstein barycenter duality]
\label{thm:classical-barycenter-duality}
The classical barycenter value in \eqref{eq:classical-barycenter} satisfies
\begin{equation}\label{eq:classical-barycenter-dual}
\BC
=
\sup_{(\boldsymbol\phi,\boldsymbol\psi)\in\A_{\mathrm{cl}}}
\sum_{i=1}^p\lambda_i\int_K\phi_i\d\nu_i.
\end{equation}
For the quadratic costs \eqref{eq:quadratic-cost}, the supremum is attained.
\end{theorem}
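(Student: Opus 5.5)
The plan is to prove weak duality directly, obtain the reverse inequality by a minimax argument on compact sets, and handle attainment separately using $c$-concavity.

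\textbf{Weak duality.} Let $(\boldsymbol\phi,\boldsymbol\psi)\in\A_{\mathrm{cl}}$, let $\nu\in\P(K)$, and let $\gamma_i\in\Pi(\nu_i,\nu)$. Integrating $\phi_i(x)+\psi_i(y)\le c_i(x,y)$ against $\gamma_i$, multiplying by $\lambda_i$ and summing gives
\[
\sum_i\lambda_i\int c_i\d\gamma_i\ \ge\ \sum_i\lambda_i\int\phi_i\d\nu_i+\int\Bigl(\sum_i\lambda_i\psi_i\Bigr)\d\nu .
\]
The last term vanishes by the zero-sum condition in $\A_0$. Taking infima over the $\gamma_i$ and then over $\nu$ yields $\BC\ge$ the dual supremum.

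\textbf{Reverse inequality.} By \Cref{thm:classical-ot-duality},
\[
\BC=\inf_{\nu}\sum_i\lambda_i\sup_{(u_i,v_i)}\Bigl\{\int u_i\d\nu_i+\int v_i\d\nu\Bigr\},
\]
with $u_i(x)+v_i(y)\le c_i$. I would write this as $\inf_{\nu\in\P(K)}\sup_{(\mathbf u,\mathbf v)}L(\nu,\mathbf u,\mathbf v)$, where
\[
L(\nu,\mathbf u,\mathbf v)=\sum_i\lambda_i\Bigl(\int u_i\d\nu_i+\int v_i\d\nu\Bigr).
\]
This $L$ is affine and weakly continuous in $\nu$ on the compact convex set $\P(K)$, and affine (hence concave) in the potentials on a convex set. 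Sion's minimax theorem then allows the exchange, giving
\[
\BC=\sup_{(\mathbf u,\mathbf v)}\inf_{\nu}L=\sup\Bigl\{\sum_i\lambda_i\int u_i\d\nu_i+\min_{y\in K}\sum_i\lambda_iv_i(y)\Bigr\}.
\]
Now set $m(y):=\sum_i\lambda_iv_i(y)$ and $m_*=\min m$, and redistribute by defining
\[
\psi_i:=v_i-m+m_*\ \ (i=1,\dots,p),\qquad \psi_i:=v_i\ (i\ge 2)\text{ and }\psi_1:=v_1-(m-m_*)/\lambda_1 .
\]
The intended choice is the second one: keep $\psi_i=v_i$ for $i\ge2$ and put $\psi_1=v_1-(m-m_*)/\lambda_1$, then shift by the constant $m_*/\lambda_1$ so that $\sum_i\lambda_i\psi_i=0$, and set $\phi_1=u_1+m_*/\lambda_1$ and $\phi_i=u_i$ for $i\ge2$. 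Since $m-m_*\ge0$, we have $\psi_1\le v_1+m_*/\lambda_1$, so the constraint $\phi_1+\psi_1\le c_1$ is preserved, and the dual objective equals $\sum_i\lambda_i\int u_i\d\nu_i+m_*$. Therefore the supremum over $\A_{\mathrm{cl}}$ is at least $\BC$.

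\textbf{Attainment for quadratic costs.} This is the main obstacle. Take a maximizing sequence and improve it by $c$-transforms: replace $\phi_i$ by $\psi_i^{c}$ for $i=1,\dots,p$, which only increases the objective. Then replace $\psi_i$ by $\phi_i^{c}$ for $i\ge2$, and reset $\psi_1:=-\lambda_1^{-1}\sum_{i\ge2}\lambda_i\psi_i$. Since each $\psi_i$ increases under $\phi_i^c$, the reset $\psi_1$ decreases, so feasibility and the objective are preserved once $\phi_1$ is recomputed as $\psi_1^c$.

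After these steps every function is a $c$-transform of a function on a compact set, hence uniformly Lipschitz with constant $\operatorname{diam}K$ for the quadratic cost. The objective is invariant under adding constants $a_i$ to $\phi_i$ and $-a_i$ to $\psi_i$ whenever $\sum_i\lambda_i a_i=0$; the zero-sum constraint also allows $\psi_i\mapsto\psi_i-b_i$ with $\sum_i\lambda_ib_i=0$ together with $\phi_i\mapsto\phi_i+b_i$. Using these shifts, normalize $\psi_i(y_0)=0$ for $i\ge2$. This gives uniform boundedness of the $\psi_i$, and hence of the $\phi_i$ through the $c$-transform.

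Arzelà–Ascoli then yields a uniformly convergent subsequence. The constraints $\phi_i+\psi_i\le c_i$ and $\sum_i\lambda_i\psi_i=0$ are closed under uniform limits, and the objective is continuous, so the limit is a maximizer. The delicate point is checking that the normalization is consistent with the zero-sum condition while keeping all $\psi_i$ bounded. Boundedness of $\psi_1$ should follow from it being a fixed linear combination of the bounded $\psi_i$, $i\ge2$, but this needs to be verified.
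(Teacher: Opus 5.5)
Your argument is correct, and it takes a genuinely different route from the paper. The paper writes out only the weak-duality computation, which is the same integration against $\gamma_i$ followed by $\sum_i\lambda_i\psi_i=0$ that you give, and defers both strong duality and attainment to Agueh and Carlier. Your proof is self-contained on compact $K$:
\begin{enumerate}
\item pairwise Kantorovich duality, then Sion's theorem over $\nu\in\P(K)$;
\item the zero-sum condition restored afterwards by absorbing $m-m_*$ into $\psi_1$.
\end{enumerate}
This mirrors the minimax proof the paper gives for \Cref{thm:ccbc-duality}. The difference is that there the minimax runs over plan densities with the zero-sum class $\A_0$ built into the dual variables, whereas you minimax over the barycenter itself and repair the constraint at the end.

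Your attainment argument also proves more than is stated. Equicontinuity of $c$-transforms needs only uniform continuity of $c_i$ on $K\times K$, so you in fact get attainment for every continuous cost. The cited $\R^d$ theory instead relies on the quadratic structure to obtain compactness.

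Two small repairs are needed:
\begin{enumerate}
\item After the shift, $\psi_1=v_1-m/\lambda_1$. The inequality you need is $\psi_1\le v_1-m_*/\lambda_1$, not $v_1+m_*/\lambda_1$; with it, $\phi_1+\psi_1\le u_1+v_1\le c_1$.
\item The point you flag is immediate. Take $b_i=\psi_i(y_0)$ for $i\ge2$ and $b_1=-\lambda_1^{-1}\sum_{i\ge2}\lambda_ib_i$. This also forces $\psi_1(y_0)=0$, and shifts commute with $c$-transforms. Moreover $\|\psi_1\|_\infty\le\frac{1-\lambda_1}{\lambda_1}\max_{i\ge2}\|\psi_i\|_\infty$. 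The Lipschitz constant of $\psi_1$ is $\frac{1-\lambda_1}{\lambda_1}\operatorname{diam}K$ rather than $\operatorname{diam}K$, which is harmless.
\end{enumerate}
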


This is the compact-domain form of the classical Wasserstein barycenter
duality of Agueh and Carlier \cite{agueh2011barycenters}.  To see the role of
the zero-sum condition, let $\nu\in\P(K)$ and
$\gamma_i\in\Pi(\nu_i,\nu)$.  For every
$(\boldsymbol\phi,\boldsymbol\psi)\in\A_{\mathrm{cl}}$,
\[
\int_K\phi_i\d\nu_i+\int_K\psi_i\d\nu
\le\int_{K\times K}c_i\d\gamma_i.
\]
Multiplying by $\lambda_i$, summing over $i$, and using
$\sum_i\lambda_i\psi_i=0$ gives the classical weak-duality inequality.  Strong
duality states that this lower bound is sharp.

Equivalently, setting $f_i:=\lambda_i\psi_i$ and eliminating the source
potentials gives the infimum-transform form
\begin{align}\label{eq:classical-barycenter-dual-inf-transform}
\BC
=
\sup\Bigg\{&
\sum_{i=1}^p
\int_K
\inf_{y\in K}
\left\{\lambda_i c_i(x,y)-f_i(y)\right\}
\d\nu_i(x):\nonumber\\
&f_i\in C(K),
\qquad
\sum_{i=1}^pf_i=0
\quad\text{on }K
\Bigg\}.
\end{align}

The marginal structure of the capacity-constrained barycenter is unchanged.
The new feature is that each domination constraint
$\gamma_i\le\widetilde\gamma_i$ introduces a nonpositive capacity multiplier.
Eliminating this multiplier replaces the classical pointwise constraint
$\phi_i+\psi_i\le c_i$ by a positive-part penalty integrated against the
prescribed capacity.  This yields the following theorem and provides the
natural bridge to entropy regularization.

\begin{theorem}[Strong duality for capacity-constrained barycenters]\label{thm:ccbc-duality}
If \(\P_{\widetilde\gamma}\neq\varnothing\), then
\begin{align}\label{eq:ccbc-dual-positive-part}
\CCBC
=\sup_{(\boldsymbol\phi,\boldsymbol\psi)\in\A_0}
\sum_{i=1}^p\lambda_i
\left[
\int_K\phi_i\d\nu_i
-\int_{K\times K}
\pos{\phi_i(x)+\psi_i(y)-c_i(x,y)}
\d\widetilde\gamma_i(x,y)
\right].
\end{align}
\end{theorem}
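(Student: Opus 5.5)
We prove the two inequalities separately. Weak duality is elementary; the reverse inequality comes from a convex minimax argument on a compact set.

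\emph{Weak duality.} Let $\nu\in\P_{\widetilde\gamma}$, let $\gamma_i\in\Pi^{\widetilde\gamma_i}(\nu_i,\nu)$, and let $(\boldsymbol\phi,\boldsymbol\psi)\in\A_0$. Write $g_i:=\phi_i\oplus\psi_i-c_i$. Since $0\le\gamma_i\le\widetilde\gamma_i$,
\[
\int g_i\d\gamma_i\le\int\pos{g_i}\d\gamma_i\le\int\pos{g_i}\d\widetilde\gamma_i .
\]
Rearranging gives
\[
\int\phi_i\d\nu_i+\int\psi_i\d\nu-\int\pos{g_i}\d\widetilde\gamma_i\le\int c_i\d\gamma_i .
\]
Multiply by $\lambda_i$ and sum over $i$. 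The terms $\int\psi_i\d\nu$ cancel because $\sum_i\lambda_i\psi_i=0$. Taking the infimum over the plans $\gamma_i$ and then over $\nu$ gives the inequality "$\sup\le\CCBC$".

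\emph{Reverse inequality.} First we rewrite the primal problem as a problem over tuples of plans. Set
\[
\Gamma:=\Bigl\{(\gamma_1,\dots,\gamma_p):\gamma_i\in\P(K\times K),\ (\pi_1)_\#\gamma_i=\nu_i,\ 0\le\gamma_i\le\widetilde\gamma_i\Bigr\}.
\]
This set is convex and weakly compact; closedness follows by the argument of \Cref{lem:cc-lsc}. The remaining constraint is that all second marginals coincide, and we dualize it with the $\psi_i$. Then
\[
\CCBC=\inf_{\boldsymbol\gamma\in\Gamma}\ \sup_{\boldsymbol\psi:\ \sum_i\lambda_i\psi_i=0}\ \sum_i\lambda_i\int\bigl(c_i-\psi_i(y)\bigr)\d\gamma_i .
\]
To justify this, note that the inner supremum is $+\infty$ unless the measures $(\pi_2)_\#\gamma_i$ are all equal. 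Indeed, the annihilator of $\{\boldsymbol\psi:\sum_i\lambda_i\psi_i=0\}$ in the pairing $\sum_i\lambda_i\int\psi_i\d\mu_i$ consists exactly of the tuples with $\mu_1=\dots=\mu_p$. When the marginals are all equal, the inner supremum is $0$, and the resulting common marginal $\nu$ is automatically feasible.

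The objective is bilinear and weakly continuous in $\boldsymbol\gamma$ on the compact convex set $\Gamma$, and linear in $\boldsymbol\psi$. Sion's minimax theorem therefore lets us exchange $\inf$ and $\sup$. This gives
\[
\CCBC=\sup_{\boldsymbol\psi}\ \sum_i\lambda_i\,\inf\Bigl\{\int(c_i-\psi_i)\d\gamma_i:\ (\pi_1)_\#\gamma_i=\nu_i,\ 0\le\gamma_i\le\widetilde\gamma_i\Bigr\}.
\]

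\emph{Dualizing the inner problems.} Each inner problem is a one-marginal capacity problem. We dualize it with a multiplier $\phi_i\in C(K)$ for the first-marginal constraint and $w\le0$ for the domination constraint, exactly as in \Cref{thm:ccot-duality}. This requires a strong duality statement for the semi-constrained problem; it follows from the same Rachev--R\"uschendorf argument, or from a second application of Sion on the compact set $\{\gamma_i:0\le\gamma_i\le\widetilde\gamma_i\}$ with $\phi_i$ as the Lagrange multiplier. The dual reads
\[
\sup_{\phi_i,\ w\le0}\int\phi_i\d\nu_i+\int w\d\widetilde\gamma_i
\qquad\text{subject to}\qquad \phi_i+\psi_i+w\le c_i .
\]
For fixed $\phi_i,\psi_i$ the best choice is $w=-\pos{\phi_i+\psi_i-c_i}$, which is continuous. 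Substituting it yields exactly the integrand in the statement, so combining with the previous display gives "$\CCBC\le\sup$".

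\emph{Main obstacle.} The inner infimum must be finite for the minimax step to make sense. If $\widetilde\gamma_i$ is too small to carry $\nu_i$ for some $i$, then the set $\Gamma$ is empty. This cannot happen under the standing assumption $\P_{\widetilde\gamma}\neq\varnothing$, which guarantees that $\Gamma$ is nonempty. The hard step is making the exchange of $\inf$ and $\sup$ rigorous in the weak topology, in particular establishing continuity of $\boldsymbol\gamma\mapsto\int\psi_i\d(\pi_2)_\#\gamma_i$ for continuous $\psi_i$, together with the one-marginal capacity duality. I would handle both by invoking Sion's theorem on $\Gamma$, using compactness from \Cref{prop:feasible-compact}-type arguments.
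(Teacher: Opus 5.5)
Your argument is correct, but it is organized differently from the paper's. The paper writes $\gamma_i=q_i\widetilde\gamma_i$ with $0\le q_i\le1$ in $L^\infty(\widetilde\gamma_i)$ and dualizes \emph{all} marginal constraints at once with $(\boldsymbol\phi,\boldsymbol\psi)\in\A_0$. It then applies Sion a single time on the box $\prod_i\{0\le q_i\le1\}$ and reads off the penalty from the pointwise identity $\min_{0\le q\le1}(c_i-\phi_i-\psi_i)q=-\pos{\phi_i+\psi_i-c_i}$.

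You instead keep the source marginals as hard constraints in $\Gamma$ and dualize only the common-target constraint. You then need a second minimax for each decoupled one-marginal problem. Its inner minimization over $0\le\gamma\le\widetilde\gamma_i$ is attained at $\mathbf{1}_{\{c_i-\phi_i-\psi_i<0\}}\widetilde\gamma_i$, which is the same pointwise computation written for measures.

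Your route gives, as a by-product, the semi-dual $\CCBC=\sup_{\sum_i\lambda_i\psi_i=0}\sum_i\lambda_i\min_{\gamma_i\in\Gamma_i}\int(c_i-\psi_i)\d\gamma_i$. This shows the barycenter dual as decoupled capacity problems linked only through zero-sum target potentials, in analogy with the Agueh--Carlier form. The paper's route is shorter and needs only one minimax. It also sets up exactly the density framework reused for the entropic dual in \Cref{thm:eccbc-duality}.

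Three minor points:
\begin{itemize}
\item For the inner problems, rely on your second Sion argument rather than on \Cref{thm:ccot-duality}. That theorem is a two-marginal statement and does not literally cover a problem with only the first marginal prescribed.
\item The continuity you flag as the hard step is immediate: $\boldsymbol\gamma\mapsto\int\psi_i(y)\d\gamma_i(x,y)$ is weakly continuous simply because $\psi_i\circ\pi_2\in C(K\times K)$.
\item Your chain of equalities already gives $\CCBC$ equal to the supremum. The separate weak-duality paragraph is correct but not needed.
\end{itemize}
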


\begin{proof}
Every measure \(\gamma_i\le\widetilde\gamma_i\) has the form
\(\gamma_i=q_i\widetilde\gamma_i\) for some
\[
q_i\in\mathcal D_i
:=\{q\in L^\infty(K\times K,\widetilde\gamma_i):0\le q\le1\}.
\]
Endow \(\mathcal D_i\) with the weak-* topology of \(L^\infty\).  It is convex and weak-* compact.  A tuple \(\boldsymbol q=(q_1,\ldots,q_p)\) represents an admissible barycenter tuple precisely when the first marginal of \(q_i\widetilde\gamma_i\) is \(\nu_i\) for every \(i\), and the second marginals of all the measures \(q_i\widetilde\gamma_i\) coincide.

These linear constraints have the indicator representation
\begin{align}\label{eq:barycentric-constraint-indicator}
\mathcal I(\boldsymbol q)
=\sup_{(\boldsymbol\phi,\boldsymbol\psi)\in\A_0}
\sum_{i=1}^p\lambda_i
\left[
\int_K\phi_i\d\nu_i
-\int_{K\times K}
\bigl(\phi_i(x)+\psi_i(y)\bigr)q_i(x,y)
\d\widetilde\gamma_i(x,y)
\right],
\end{align}
where \(\mathcal I(\boldsymbol q)=0\) for an admissible tuple and
\(\mathcal I(\boldsymbol q)=+\infty\) otherwise.  Indeed, the expression vanishes on every admissible tuple.  If a first marginal is incorrect, a source potential \(\phi_i\) separates the two measures; if the second marginals do not coincide, continuous functions \(\psi_i\) satisfying the weighted zero-sum condition separate them.  Scaling the separating functions gives \(+\infty\).

Consequently,
\begin{align*}
\CCBC
&=\min_{\boldsymbol q\in\prod_i\mathcal D_i}
\sup_{(\boldsymbol\phi,\boldsymbol\psi)\in\A_0}
\sum_{i=1}^p\lambda_i
\left[
\int_K\phi_i\d\nu_i
+\int_{K\times K}
\bigl(c_i-\phi_i-\psi_i\bigr)q_i
\d\widetilde\gamma_i
\right].
\end{align*}
The set \(\prod_i\mathcal D_i\) is compact and convex, the displayed functional is affine and weak-* continuous in \(\boldsymbol q\), and it is affine in the potentials.  Sion's minimax theorem \cite{sion1958general} therefore permits the interchange of minimum and supremum.  Finally, the minimization in \(q_i\) is pointwise, and
\[
\min_{0\le q\le1}(c_i-\phi_i-\psi_i)q
=-\pos{\phi_i+\psi_i-c_i}.
\]
Substitution gives \eqref{eq:ccbc-dual-positive-part}.
\end{proof}

The positive-part penalty is equivalent to a formulation with explicit capacity potentials.  This is the form closest to the two-marginal duality in \Cref{thm:ccot-duality}.

\begin{corollary}[Capacity-multiplier form]\label{cor:ccbc-capacity-multiplier}
Let \(\widehat\A_0\) be the set of
\(\phi_i,\psi_i\in C(K)\) and
\(w_i\in C(K\times K)\) satisfying
\begin{equation}\label{eq:unreg-dual-constraints}
w_i\le0,
\qquad
\phi_i(x)+\psi_i(y)+w_i(x,y)\le c_i(x,y),
\qquad
\sum_{i=1}^p\lambda_i\psi_i=0.
\end{equation}
Then
\begin{equation}\label{eq:ccbc-dual-symmetric}
\CCBC
=\sup_{(\boldsymbol\phi,\boldsymbol\psi,\boldsymbol w)\in\widehat\A_0}
\sum_{i=1}^p\lambda_i
\left(
\int_K\phi_i\d\nu_i
+\int_{K\times K}w_i\d\widetilde\gamma_i
\right).
\end{equation}
\end{corollary}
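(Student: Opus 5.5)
The plan is to deduce the corollary from \Cref{thm:ccbc-duality} by proving two inequalities between the supremum in \eqref{eq:ccbc-dual-symmetric} and the supremum in \eqref{eq:ccbc-dual-positive-part}. The two problems have the same pairs $(\boldsymbol\phi,\boldsymbol\psi)\in\A_0$. The only difference is the extra multiplier $\boldsymbol w$, and for fixed $(\boldsymbol\phi,\boldsymbol\psi)$ it can be optimized explicitly.

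\emph{Upper bound.} Fix $(\boldsymbol\phi,\boldsymbol\psi,\boldsymbol w)\in\widehat\A_0$. The constraints give $w_i\le c_i-\phi_i-\psi_i$ and $w_i\le0$, so
\[
w_i\le\min\{0,c_i-\phi_i-\psi_i\}=-\pos{\phi_i+\psi_i-c_i}
\]
pointwise. Since $\widetilde\gamma_i\ge0$, integrating gives
\[
\int w_i\d\widetilde\gamma_i\le-\int\pos{\phi_i+\psi_i-c_i}\d\widetilde\gamma_i.
\]
Also, $(\boldsymbol\phi,\boldsymbol\psi)\in\A_0$. Hence the objective of \eqref{eq:ccbc-dual-symmetric} is at most the objective of \eqref{eq:ccbc-dual-positive-part}, which is at most $\CCBC$ by \Cref{thm:ccbc-duality}.

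\emph{Lower bound.} Given $(\boldsymbol\phi,\boldsymbol\psi)\in\A_0$, define
\[
w_i(x,y):=-\pos{\phi_i(x)+\psi_i(y)-c_i(x,y)}.
\]
This function is continuous on $K\times K$, because it is built from continuous functions by composition with $s\mapsto\pos{s}$. It is nonpositive. It also satisfies $\phi_i+\psi_i+w_i=\min\{\phi_i+\psi_i,c_i\}\le c_i$. So the triple belongs to $\widehat\A_0$, and its objective equals the positive-part objective exactly. Taking suprema shows that the supremum in \eqref{eq:ccbc-dual-symmetric} is at least the supremum in \eqref{eq:ccbc-dual-positive-part}, which equals $\CCBC$.

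The only point that needs care is that the optimal multiplier lies in $C(K\times K)$, as the definition of $\widehat\A_0$ requires. The positive-part construction settles this, so there is no real obstacle. The assumption $\P_{\widetilde\gamma}\neq\varnothing$ enters only through \Cref{thm:ccbc-duality}.
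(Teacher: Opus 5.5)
Your proposal is correct and follows essentially the same route as the paper: for fixed $(\boldsymbol\phi,\boldsymbol\psi)\in\A_0$ you take the largest admissible multiplier $w_i=-\pos{\phi_i+\psi_i-c_i}$, which is continuous, nonpositive and feasible, and you note that any other admissible $w_i$ gives no larger value, so the claim reduces to \Cref{thm:ccbc-duality}. Your version just writes out the two inequalities, using $\lambda_i>0$ and $\widetilde\gamma_i\ge0$, which the paper's short proof leaves implicit.
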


\begin{proof}
For fixed \(\phi_i\) and \(\psi_i\), the largest admissible capacity potential is
\[
w_i(x,y)=-\pos{\phi_i(x)+\psi_i(y)-c_i(x,y)}.
\]
It is continuous, nonpositive, and satisfies the pointwise constraint in
\eqref{eq:unreg-dual-constraints}.  Substituting it into
\eqref{eq:ccbc-dual-symmetric} yields
\eqref{eq:ccbc-dual-positive-part}; every other admissible \(w_i\) gives a no larger value.
\end{proof}

Eliminating the source potentials recovers the form used in the original capacity-constrained barycenter argument.

\begin{corollary}[Infimum-transform form of the dual]\label{cor:ccbc-inf-transform}
Let \(f_i:=\lambda_i\psi_i\) and \(\omega_i:=\lambda_i w_i\).  Then
\begin{align}\label{eq:ccbc-dual-inf-transform}
\CCBC
=\sup\Bigg\{&\sum_{i=1}^p
\int_K
\inf_{z\in K}
\left\{\lambda_i c_i(x,z)-f_i(z)-\omega_i(x,z)\right\}
\d\nu_i(x)\nonumber\\
&\quad+
\sum_{i=1}^p\int_{K\times K}\omega_i\d\widetilde\gamma_i:
\quad
f_i\in C(K),\quad
\omega_i\in C(K\times K),\quad
\omega_i\le0,
\quad
\sum_{i=1}^pf_i=0
\Bigg\}.
\end{align}
\end{corollary}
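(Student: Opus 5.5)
The plan is to derive the infimum-transform form directly from the capacity-multiplier form in \Cref{cor:ccbc-capacity-multiplier}, and to eliminate the source potentials $\phi_i$ exactly as in the classical passage from \eqref{eq:classical-barycenter-dual} to \eqref{eq:classical-barycenter-dual-inf-transform}. Write $D$ for the supremum in \eqref{eq:ccbc-dual-inf-transform}. The substitution $f_i=\lambda_i\psi_i$, $\omega_i=\lambda_iw_i$ is a bijection between the triples $(\boldsymbol\psi,\boldsymbol w)$ with $w_i\le0$ and $\sum_i\lambda_i\psi_i=0$, and the pairs $(\boldsymbol f,\boldsymbol\omega)$ with $\omega_i\le0$ and $\sum_if_i=0$, because every $\lambda_i>0$. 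Under this substitution the pointwise constraint in \eqref{eq:unreg-dual-constraints} becomes
\[
\lambda_i\phi_i(x)\le\lambda_ic_i(x,z)-f_i(z)-\omega_i(x,z)\qquad\text{for all }x,z\in K.
\]

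\emph{Step 1 ($\CCBC\le D$).} Take any admissible $(\boldsymbol\phi,\boldsymbol\psi,\boldsymbol w)\in\widehat\A_0$. The displayed inequality holds for every $z$, so it gives $\lambda_i\phi_i(x)\le g_i(x)$, where
\[
g_i(x):=\inf_{z\in K}\{\lambda_ic_i(x,z)-f_i(z)-\omega_i(x,z)\}.
\]
Integrating against $\nu_i$ and adding $\int\omega_i\d\widetilde\gamma_i=\lambda_i\int w_i\d\widetilde\gamma_i$ shows that the objective in \eqref{eq:ccbc-dual-symmetric} is at most the objective in \eqref{eq:ccbc-dual-inf-transform}. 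Taking the supremum over $\widehat\A_0$ gives $\CCBC\le D$.

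\emph{Step 2 ($D\le\CCBC$).} Fix admissible $(\boldsymbol f,\boldsymbol\omega)$ and set $\phi_i:=g_i/\lambda_i$, $\psi_i:=f_i/\lambda_i$, $w_i:=\omega_i/\lambda_i$. The only point to check is that $\phi_i\in C(K)$. The map $(x,z)\mapsto\lambda_ic_i(x,z)-f_i(z)-\omega_i(x,z)$ is continuous on the compact set $K\times K$, hence uniformly continuous. An infimum over $z$ of a family that is equicontinuous in $x$ is continuous, with the same modulus of continuity, and it is finite. With these choices $\phi_i(x)+\psi_i(y)+w_i(x,y)\le c_i(x,y)$ holds by the definition of the infimum, so the triple lies in $\widehat\A_0$. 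Its objective in \eqref{eq:ccbc-dual-symmetric} equals the objective of $(\boldsymbol f,\boldsymbol\omega)$ in \eqref{eq:ccbc-dual-inf-transform}. Hence $D\le\CCBC$.

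Combining the two steps with \Cref{cor:ccbc-capacity-multiplier} gives the claim. The only step with any substance is the continuity of the infimum in Step 2, which follows from the uniform continuity argument above. Measurability and integrability of $g_i$ then come for free. Since $K$ is compact and all functions are continuous, I expect no genuine obstacle.
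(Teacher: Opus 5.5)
Your proposal is correct and takes the same route as the paper: start from the capacity-multiplier form \eqref{eq:ccbc-dual-symmetric}, replace each $\phi_i$ by the largest admissible source potential $\inf_{z\in K}\{c_i(x,z)-\psi_i(z)-w_i(x,z)\}$, and rescale by $\lambda_i$. Your two-inequality argument and the uniform-continuity check that this infimum lies in $C(K)$ spell out what the paper's one-line proof leaves implicit.
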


\begin{proof}
For fixed \(\psi_i\) and \(w_i\), the largest admissible source potential is
\[
\phi_i(x)
=\inf_{z\in K}\{c_i(x,z)-\psi_i(z)-w_i(x,z)\}.
\]
Substituting this expression into \eqref{eq:ccbc-dual-symmetric} and multiplying the \(i\)th potentials by \(\lambda_i\) gives \eqref{eq:ccbc-dual-inf-transform}.
\end{proof}

\begin{remark}[Recovery of the classical barycenter dual]
\label{rem:classical-special-case}
Setting $w_i\equiv0$ in the capacity-multiplier constraints
\eqref{eq:unreg-dual-constraints} recovers the classical admissible potential
class \eqref{eq:classical-barycenter-potential-class}.  The objective in
\eqref{eq:ccbc-dual-symmetric} then reduces to the classical barycenter dual
objective \eqref{eq:classical-barycenter-dual}.  Thus the functions $w_i$
measure precisely the departure from the unconstrained barycenter geometry.

Moreover, if a classical optimal barycenter admits optimal transport plans
satisfying all prescribed capacity bounds, then it is feasible for the
capacity-constrained problem and the two optimal values coincide.
\end{remark}

\section{Entropy regularization: classical and capacity-constrained models}
\label{sec:entropic}

We first recall the classical entropy-regularized transport problem and the
fixed-support regularized barycenter model.  We then add the hard transport
capacities introduced in the preceding sections.

Subsections~\ref{subsec:classical-entropic-ot} and
\ref{subsec:classical-entropic-barycenter} collect established
background on entropy-regularized transport and fixed-support
regularized barycenters.  The analysis specific to the present
capacity-constrained model begins in
Subsection~\ref{subsec:entropic-capacity-transport}.

\subsection{Classical entropy-regularized optimal transport}
\label{subsec:classical-entropic-ot}

\begin{definition}[Relative entropy]\label{def:relative-entropy}
For $\gamma,\xi\in\P(K\times K)$, the relative entropy of $\gamma$ with
respect to $\xi$ is
\begin{equation}\label{eq:kl-definition}
\KL(\gamma\mid\xi)
:=
\begin{cases}
\displaystyle
\int_{K\times K}
\log\!\left(\frac{\d\gamma}{\d\xi}\right)\d\gamma,
&\gamma\ll\xi,\\[0.8em]
+\infty,&\text{otherwise}.
\end{cases}
\end{equation}
If $\gamma=q\xi$, then
\begin{equation}\label{eq:entropy-shift}
\KL(\gamma\mid\xi)
=1+\int_{K\times K}\rho(q)\d\xi,
\qquad
\rho(q):=q\log q-q,
\end{equation}
where $0\log0:=0$.
\end{definition}

\begin{definition}[Entropy-regularized optimal transport]
\label{def:classical-entropic-ot}
Let $\mu,\nu\in\P(K)$, let $c\in C(K\times K)$, and let
$\epsilon>0$.  The entropy-regularized optimal transport value is
\begin{equation}\label{eq:classical-entropic-ot}
\EOT_{\epsilon,c}(\mu,\nu)
:=
\inf_{\gamma\in\Pi(\mu,\nu)}
\left\{
\int_{K\times K}c\d\gamma
+\epsilon\KL(\gamma\mid\mu\otimes\nu)
\right\}.
\end{equation}
\end{definition}

\begin{theorem}[Entropic Kantorovich duality]
\label{thm:classical-entropic-ot-duality}
The problem \eqref{eq:classical-entropic-ot} has a unique minimizer and
\begin{align}\label{eq:classical-entropic-ot-dual}
\EOT_{\epsilon,c}(\mu,\nu)
=\epsilon+
\sup_{u,v\in C(K)}
\Bigg\{
&\int_Ku\d\mu+\int_Kv\d\nu\nonumber\\
&-\epsilon\int_{K\times K}
\exp\!\left(\frac{u(x)+v(y)-c(x,y)}{\epsilon}\right)
\d(\mu\otimes\nu)(x,y)
\Bigg\}.
\end{align}
\end{theorem}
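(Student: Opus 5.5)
Our plan mirrors the proof of \Cref{thm:ccbc-duality}: write plans as densities, represent the marginal constraints by a supremum over potentials, interchange min and sup by a minimax argument, and then carry out the remaining minimization pointwise.

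\emph{Existence and uniqueness.} If $\KL(\gamma\mid\mu\otimes\nu)<\infty$, then $\gamma=q\,(\mu\otimes\nu)$ with $q\ge0$. By \eqref{eq:entropy-shift}, the objective equals
\[
\epsilon+\int (cq+\epsilon\rho(q))\d(\mu\otimes\nu).
\]
The constraint $\gamma\in\Pi(\mu,\nu)$ is linear in $q$. The candidate $q\equiv1$ is feasible, so the value is finite. Since $\rho$ is superlinear, sublevel sets are uniformly integrable and hence weakly compact in $L^1(\mu\otimes\nu)$ by Dunford--Pettis. The marginal constraints are weakly closed, and the functional is weakly lower semicontinuous, being convex and strongly lsc. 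Together these give a minimizer. Uniqueness follows from strict convexity of $\rho$ on $[0,\infty)$ and convexity of the feasible set.

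\emph{Weak duality.} For $u,v\in C(K)$ and any feasible $q$, we use the pointwise Fenchel--Young inequality
\[
sq-\epsilon\rho(q)\le\epsilon e^{s/\epsilon},
\qquad s=u(x)+v(y)-c(x,y).
\]
Integrating and using the marginal conditions gives
\[
\int u\d\mu+\int v\d\nu-\epsilon\int e^{(u\oplus v-c)/\epsilon}\d(\mu\otimes\nu)\le \int(cq+\epsilon\rho(q))\d(\mu\otimes\nu).
\]
Adding $\epsilon$ to both sides yields the inequality ``$\ge$'' in \eqref{eq:classical-entropic-ot-dual}.

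\emph{Strong duality.} This is the main step. We write the primal problem as
\[
\inf_q\sup_{u,v}\Bigl\{\int u\d\mu+\int v\d\nu+\int(c-u\oplus v)q+\epsilon\rho(q)\Bigr\}.
\]
The inner supremum encodes the marginal constraints, exactly as in \eqref{eq:barycentric-constraint-indicator}, since continuous functions separate measures. To apply Sion's theorem we need a compact convex set of $q$. We restrict to $\{q\ge0:\int\rho(q)\le M\}$, which is weakly compact in $L^1$ and contains the primal optimizer and $q\equiv1$ once $M$ is large. On this set the functional is convex and lsc in $q$ and affine, hence concave, in $(u,v)$. Interchanging gives the pointwise minimum
\[
\min_{q\ge0}\{(c-s)q+\epsilon\rho(q)\}=-\epsilon e^{s/\epsilon}.
\]

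The obstacle is the truncation: after the interchange the inner minimum is taken over the truncated set, not over all $q\ge0$. We plan to handle this in one of two ways. The first is to let $M\to\infty$ and use monotonicity of the truncated values. The second, more robust, is a Fenchel--Rockafellar argument. There we view $(u,v)\mapsto\epsilon\int e^{(u\oplus v-c)/\epsilon}$ as a convex continuous function on $C(K)^2$, whose conjugate on measures is exactly $\epsilon\KL(\cdot\mid\mu\otimes\nu)$ composed with the marginal map plus the linear cost term. Continuity of this function at a point, for instance $(0,0)$, gives the constraint qualification, so there is no duality gap.

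Dual attainment in $C(K)$ is not claimed, so it is not needed here. If desired, it could be obtained from a Sinkhorn fixed point, since $c$-transform-type soft-min operators produce equicontinuous potentials on compact $K$.
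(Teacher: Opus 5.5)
The paper does not prove this statement itself: value duality is quoted from \cite{marino2020optimal}, uniqueness is attributed to strict convexity of relative entropy on $\Pi(\mu,\nu)$, and the additive $\epsilon$ comes from \eqref{eq:entropy-shift}. Your existence (Dunford--Pettis), uniqueness, and weak-duality (Fenchel--Young) steps are correct. The first of your two remedies for the truncation, however, does not work.

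Sion on $Q_M=\{q\ge0:\int\rho(q)\d(\mu\otimes\nu)\le M\}$ yields $\EOT_{\epsilon,c}(\mu,\nu)-\epsilon=D_M$ for every $M\ge\int\rho(q^*)\d(\mu\otimes\nu)$, where $D_M$ is the dual value with the inner minimum taken over $Q_M$. Since $Q_M\subset\{q\ge0\}$, you only get $D_M\ge D$, with $D$ the untruncated dual value; this is weak duality again. Moreover, $D_M$ is already constant in $M$ in this range, so monotonicity gives nothing. What you actually need is $\inf_M D_M=D$, which is itself an interchange of $\inf_M$ and $\sup_{u,v}$. For fixed $(u,v)$ the entropy bound becomes inactive once $M\ge\int\rho\bigl(e^{(u\oplus v-c)/\epsilon}\bigr)\d(\mu\otimes\nu)$, but this threshold need not stay bounded along a maximizing family of potentials. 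The paper's Sion arguments in \Cref{thm:eccot-duality} and \Cref{thm:eccbc-duality} avoid this issue because there the compact box $0\le q\le h$ is the genuine constraint. Uniformity would require a priori bounds on near-optimal potentials, i.e.\ the soft $c$-transform/Arzel\`a--Ascoli compactness from your last paragraph. Once you have that, the truncation is superfluous: the dual is attained, the Gibbs density of the maximizer has marginals $\mu,\nu$ by the first-order conditions, and Fenchel--Young holds with equality. This is in substance the Schr\"odinger-potential approach behind the cited result.

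Your Fenchel--Rockafellar alternative is sound and genuinely different. Set $A(u,v)=u\oplus v$ and $G(w)=\epsilon\int e^{(w-c)/\epsilon}\d(\mu\otimes\nu)$; $G$ is finite and continuous on all of $C(K\times K)$. Your dual value is exactly $(G\circ A)^*(\mu,\nu)$. The conjugate-of-composition rule then gives $(G\circ A)^*(\mu,\nu)=\min\{G^*(\gamma):(\pi_1)_\#\gamma=\mu,\ (\pi_2)_\#\gamma=\nu\}$, with the minimum attained. Note that this is an infimal image of $G^*$ under the marginal map, not a composition as you wrote.

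The step you assert without proof carries the real content:
$G^*(\gamma)=\int c\d\gamma+\epsilon\int\rho\bigl(\tfrac{\d\gamma}{\d(\mu\otimes\nu)}\bigr)\d(\mu\otimes\nu)$ if $0\le\gamma\ll\mu\otimes\nu$, and $G^*(\gamma)=+\infty$ otherwise. Because $w$ ranges only over continuous functions, this needs Rockafellar's theorem on convex integral functionals. Alternatively, use a Lusin approximation for the absolutely continuous part, plus Urysohn/outer-regularity and sign arguments to exclude singular or negative parts.

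Compared with the cited route, yours is self-contained for value duality and gives primal attainment for free. It yields no dual maximizers, however, so the Gibbs relation \eqref{eq:classical-gibbs-relation} remains conditional. The potential-based route delivers both.
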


The additive constant $\epsilon$ in
\eqref{eq:classical-entropic-ot-dual} is a consequence of the
relative-entropy convention \eqref{eq:kl-definition}; compare
\eqref{eq:entropy-shift}.  The equality between the primal infimum and
the dual supremum in \eqref{eq:classical-entropic-ot-dual} is an
established duality result for entropy-regularized optimal transport;
see \cite{marino2020optimal}.  This equality concerns the optimal
values and does not, by itself, assert that the dual supremum is
attained in $C(K)\times C(K)$.  Uniqueness of the primal optimizer
follows from the strict convexity of relative entropy on the convex
set $\Pi(\mu,\nu)$. 

Whenever the dual supremum is attained at $(u^*,v^*)$, the optimal plan satisfies
\begin{equation}\label{eq:classical-gibbs-relation}
\frac{\d\gamma_\epsilon}{\d(\mu\otimes\nu)}(x,y)
=
\exp\!\left(
\frac{u^*(x)+v^*(y)-c(x,y)}{\epsilon}
\right)
\qquad (\mu\otimes\nu)\text{-a.e.}
\end{equation}

\subsection{Fixed-support entropy-regularized barycenters}
\label{subsec:classical-entropic-barycenter}

There are several notions of entropy regularization for barycenters.  We use
the plan-regularized model with a fixed barycenter support measure, because it
keeps the regularizing references independent of the unknown barycenter and
is the model that will be combined with the hard capacities below.

Let $\eta\in\P(K)$ satisfy $\spt\eta=K$, and define
\begin{equation}\label{eq:reference-measure}
\xi_i:=\nu_i\otimes\eta,
\qquad i=1,\ldots,p.
\end{equation}

\begin{definition}[Fixed-support entropy-regularized barycenter]
\label{def:classical-entropic-barycenter}
For $\epsilon>0$, define
\begin{align}\label{eq:classical-entropic-barycenter}
\EBC_{\epsilon,\eta}
:=
\inf_{\substack{\nu\in\P(K)\\
\gamma_i\in\Pi(\nu_i,\nu)}}
\sum_{i=1}^p\lambda_i
\left[
\int_{K\times K}c_i\d\gamma_i
+\epsilon\KL(\gamma_i\mid\xi_i)
\right].
\end{align}
\end{definition}

\begin{theorem}[Fixed-support entropic barycenter duality]
\label{thm:classical-entropic-barycenter-duality}
The problem \eqref{eq:classical-entropic-barycenter} has a unique optimal
tuple of plans and hence a unique barycenter.  Moreover,
\begin{align}\label{eq:classical-entropic-barycenter-dual}
\EBC_{\epsilon,\eta}
=\epsilon+
\sup_{(\boldsymbol\phi,\boldsymbol\psi)\in\A_0}
\sum_{i=1}^p\lambda_i
\Bigg[
\int_K\phi_i\d\nu_i
-\epsilon\int_{K\times K}
\exp\!\left(
\frac{\phi_i(x)+\psi_i(y)-c_i(x,y)}{\epsilon}
\right)
\d\xi_i(x,y)
\Bigg].
\end{align}
Whenever the dual supremum is attained, the optimal plans satisfy
\begin{equation}\label{eq:classical-barycenter-gibbs-relation}
\frac{\d\gamma_{i,\epsilon}}{\d\xi_i}(x,y)
=
\exp\!\left(
\frac{\phi_i^*(x)+\psi_i^*(y)-c_i(x,y)}{\epsilon}
\right)
\qquad \xi_i\text{-a.e.}
\end{equation}
\end{theorem}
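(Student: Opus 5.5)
The plan is to follow the strategy of \Cref{thm:ccbc-duality}, with the box constraint $0\le q\le 1$ replaced by the entropy term. There are three parts: primal existence and uniqueness, value duality through a minimax interchange, and the Gibbs relation read off from pointwise first-order conditions.

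\emph{Primal existence and uniqueness.} If $\KL(\gamma_i\mid\xi_i)<\infty$, then $\gamma_i=q_i\xi_i$ for some density $q_i$. By \eqref{eq:entropy-shift}, the objective in \eqref{eq:classical-entropic-barycenter} then equals
\[
\epsilon+\sum_i\lambda_i\int\bigl(c_iq_i+\epsilon\rho(q_i)\bigr)\d\xi_i .
\]
The feasible set is nonempty and has finite value, since $\gamma_i=\nu_i\otimes\eta$ with $\nu=\eta$ is admissible.
\begin{itemize}
\item \emph{Existence.} Take a minimizing sequence. Because $c_i$ is bounded, the entropies are bounded. By de la Vall\'ee-Poussin, the densities $q_i$ are then uniformly integrable, hence weakly compact in $L^1(\xi_i)$. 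The marginal constraints (first marginal $\nu_i$, common second marginal) are linear and closed under weak $L^1$ convergence. The functional is convex and strongly lower semicontinuous, so it is weakly lower semicontinuous, and a minimizer exists.
\item \emph{Uniqueness.} The feasible set of tuples is convex, as in \Cref{prop:basic-properties}(ii). Since $\rho$ is strictly convex and every $\lambda_i>0$, any two minimizing tuples must coincide $\xi_i$-a.e. The barycenter is the common second marginal, so it is unique as well.
\end{itemize}

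\emph{Duality.} Apply the indicator representation \eqref{eq:barycentric-constraint-indicator} with $\xi_i$ in place of $\widetilde\gamma_i$, and write the primal as
\[
\inf_{\boldsymbol q}\ \sup_{(\boldsymbol\phi,\boldsymbol\psi)\in\A_0} L(\boldsymbol q;\boldsymbol\phi,\boldsymbol\psi),
\]
where $L$ is the Lagrangian with integrand $(c_i-\phi_i-\psi_i)q_i+\epsilon\rho(q_i)$. This is the step I expect to be the main obstacle: Sion's theorem needs compactness on one side. I would restrict $\boldsymbol q$ to a sublevel set $\{\sum_i\lambda_i\int\rho(q_i)\d\xi_i\le M\}$. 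This set is convex and weakly compact in $L^1$, and with $M$ large it contains the minimizer, so the primal value is unchanged. On it, $L$ is convex and weakly lower semicontinuous in $\boldsymbol q$, because the potentials are continuous and hence bounded, and $L$ is affine in the potentials. Sion then lets us swap the inf and the sup.

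After the swap, the inner infimum is pointwise, $\inf_{q\ge0}\{sq+\epsilon\rho(q)\}$, and it is attained at $q=e^{-s/\epsilon}$. Taking $s=c_i-\phi_i-\psi_i$ gives the penalty $-\epsilon e^{(\phi_i+\psi_i-c_i)/\epsilon}$, which yields \eqref{eq:classical-entropic-barycenter-dual}, with the constant $\epsilon$ coming from \eqref{eq:entropy-shift}. The sublevel restriction has to be removed with care. For fixed potentials, the unconstrained pointwise minimizer has bounded density, so it satisfies the sublevel constraint once $M$ is large. Uniformity over potentials is not available, however. I would therefore argue the two inequalities separately:
\begin{itemize}
\item Weak duality, primal value $\ge$ dual value, follows directly by integrating the Fenchel--Young inequality $sq+\epsilon\rho(q)\ge-\epsilon e^{-s/\epsilon}$.
\item For the reverse inequality, the minimax with the constraint gives primal value $=\sup\inf_{\text{sublevel}}L$. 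For each fixed potential, enlarging $M$ only decreases the inner infimum, so the constrained inner infimum is at least the unconstrained one. This is the wrong direction for concluding. The fix is to let $M\to\infty$ and use that $\sup\inf$ is monotone in $M$ while the primal value is constant.
\end{itemize}
An alternative that avoids these issues is to perturb the constraint functional and use Fenchel--Rockafellar. This works in $C(K)^{2p}$, where the constraint map is continuous and the primal value function is convex and finite near $0$.

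\emph{Gibbs relation.} Suppose the dual supremum is attained at $(\boldsymbol\phi^*,\boldsymbol\psi^*)$. Primal optimality and equality in the minimax give
\[
\sum_i\lambda_i\int\Bigl[(c_i-\phi_i^*-\psi_i^*)q_i+\epsilon\rho(q_i)+\epsilon e^{(\phi_i^*+\psi_i^*-c_i)/\epsilon}\Bigr]\d\xi_i=0 .
\]
Here the marginal terms have been absorbed using the admissibility of $\boldsymbol q$. Each integrand is nonnegative by Fenchel--Young, so each vanishes a.e. Equality in Fenchel--Young holds only at the unique pointwise minimizer, which gives \eqref{eq:classical-barycenter-gibbs-relation}.
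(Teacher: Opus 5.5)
The parts other than strong duality are sound. Your existence argument (entropy bound, de la Vall\'ee-Poussin, weak $L^1$ compactness), uniqueness by strict convexity of $\rho$, weak duality by Fenchel--Young, and the Gibbs relation from equality in Fenchel--Young all hold. Uniqueness and the Gibbs relation are argued as in the paper, which does not prove existence or strong duality itself but cites the continuous fixed-support regularized-barycenter theorem.

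\textbf{The gap.} It lies in your self-contained proof of the reverse inequality $\EBC_{\epsilon,\eta}-\epsilon\le D$, where $D$ is the supremum in \eqref{eq:classical-entropic-barycenter-dual}. Let $Q_M:=\{\boldsymbol q\ge0:\sum_i\lambda_i\int\rho(q_i)\d\xi_i\le M\}$ and $M_0:=\sum_i\lambda_i\int\rho(q_i^*)\d\xi_i$, the entropy of the minimizer. Sion on $Q_M$ does give $\EBC_{\epsilon,\eta}-\epsilon=D_M:=\sup_{\A_0}\min_{Q_M}L$ for every $M\ge M_0$. For each fixed pair of potentials, $\min_{Q_M}L$ also decreases to $\inf_{\boldsymbol q\ge0}L$. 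But $\lim_M D_M=D$ is exactly the interchange $\inf_M\sup_{\A_0}=\sup_{\A_0}\inf_M$, which needs the uniformity you yourself say is unavailable. Monotonicity in $M$ and constancy of the primal value only give $\lim_M D_M\ge D$, which is weak duality again. So the proposed fix proves nothing beyond weak duality.

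\textbf{Repair via Fenchel--Rockafellar.} This is the right route, but the qualification condition has to sit on the exponential side. The entropic problem's value function under perturbations of the marginal data is not finite near $0$. Adding to $\nu_i$ a small mass singular to $\nu_i$ makes the problem infeasible, since $\gamma_i\ll\nu_i\otimes\eta$ forces the first marginal to be $\ll\nu_i$. Instead, on $C(K)^{2p}$ take:
\begin{itemize}
\item $F(\boldsymbol\phi,\boldsymbol\psi)=-\sum_i\lambda_i\int\phi_i\d\nu_i$ plus the indicator of $\sum_i\lambda_i\psi_i=0$;
\item $A(\boldsymbol\phi,\boldsymbol\psi)=(\phi_i(x)+\psi_i(y))_i$;
\item $G(\boldsymbol w)=\epsilon\sum_i\lambda_i\int e^{(w_i-c_i)/\epsilon}\d\xi_i$.
\end{itemize}
Since $G$ is finite and continuous on all of $C(K\times K)^p$, Fenchel--Rockafellar applies with no further hypothesis. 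It gives an attained dual over $\M(K\times K)^p$. There $F^*$ encodes the source and common-marginal constraints. For $\gamma_i\ge0$ with $\gamma_i\ll\xi_i$, one has $G^*((\lambda_i\gamma_i)_i)=\sum_i\lambda_i[\int c_i\d\gamma_i+\epsilon\int\rho(\d\gamma_i/\d\xi_i)\d\xi_i]$, and $G^*=+\infty$ otherwise. That conjugate computation is the one real step.

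\textbf{Repair keeping Sion.} Alternatively, dualize the scalar constraint defining $Q_M$, using the Slater point $\boldsymbol q\equiv1$. This gives $\EBC_{\epsilon,\eta}-\epsilon=\sup_{t\ge0}[D^{(\epsilon+t)}-tM]$, where $D^{(\epsilon')}$ is the dual value at parameter $\epsilon'$.
\begin{enumerate}
\item Weak duality at $\epsilon+t$, evaluated at $\boldsymbol q^*$, gives $D^{(\epsilon+t)}\le(\EBC_{\epsilon,\eta}-\epsilon)+tM_0$. Once $M>M_0$, this forces near-maximizing $t\to0$ and $D^{(\epsilon+t)}\to\EBC_{\epsilon,\eta}-\epsilon$.
\item The map $(\boldsymbol\phi,\boldsymbol\psi,\epsilon')\mapsto-\epsilon'e^{(\phi_i+\psi_i-c_i)/\epsilon'}$ is jointly concave, being a negative perspective function. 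Hence $\epsilon'\mapsto D^{(\epsilon')}$ is concave and finite on $(0,\infty)$, and therefore continuous at $\epsilon$.
\end{enumerate}
Together these give $D^{(\epsilon)}=\EBC_{\epsilon,\eta}-\epsilon$, which closes the argument.
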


\begin{proof}
The strong duality and primal existence statements follow from the
fixed-support regularized barycenter theorem in \cite{li2020continuous}.  The
objective used there contains $\epsilon\rho$; the present objective differs by
the constant $\epsilon$ because of \eqref{eq:entropy-shift}.  The feasible set of plan tuples is convex, and the
sum of the fixed-reference relative entropies is strictly convex.  Hence the
optimal tuple, and therefore its common second marginal, is unique.  The
relation \eqref{eq:classical-barycenter-gibbs-relation} is the corresponding
Fenchel equality whenever dual optimizers exist.
\end{proof}

\begin{remark}[The reference measure is fixed]
\label{rem:fixed-reference}
The measure $\eta$ is chosen before the optimization.  Replacing $\eta$ in
\eqref{eq:reference-measure} by the unknown barycenter $\nu$ produces a
different regularized model and does not lead to the dual formula
\eqref{eq:classical-entropic-barycenter-dual}.  We use the fixed-reference
model throughout the remainder of the paper.
\end{remark}

\begin{remark}[Fixed- and variable-reference regularization]
\label{rem:fixed-variable-reference}
One may alternatively consider a regularization term of the form
\[
\KL(\gamma_i\mid\nu_i\otimes\nu),
\]
where $\nu$ is the unknown barycenter.  This produces a
variable-reference model that is different from
\eqref{eq:classical-entropic-barycenter}.  In the present paper,
$\eta$ is prescribed before the optimization and
\[
\xi_i=\nu_i\otimes\eta
\]
remains fixed.  This choice is essential for the convex dual
formulation used below and for representing each hard capacity as
\[
\widetilde\gamma_i=h_i\xi_i.
\]
No equivalence between the fixed- and variable-reference models is
claimed.
\end{remark}

\subsection{Entropy-regularized capacity-constrained transport}
\label{subsec:entropic-capacity-transport}

Let $\xi\in\P(K\times K)$ and let
$h\in L^\infty_+(K\times K,\xi)$, where $L^\infty_+$ denotes the cone
of $\xi$-almost everywhere nonnegative essentially bounded functions.  The
finite measure $\widetilde\gamma:=h\xi$ is interpreted as a hard transport
capacity.

\begin{definition}[Entropy-regularized capacity-constrained transport]
\label{def:eccot}
For $\mu,\nu\in\P(K)$, $c\in C(K\times K)$, and $\epsilon>0$, define
\begin{equation}\label{eq:eccot-primal}
\ECCOT_{\epsilon,c}^{h,\xi}(\mu,\nu)
:=
\inf_{\substack{\gamma\in\Pi(\mu,\nu)\\
0\le\gamma\le h\xi}}
\left\{
\int_{K\times K}c\d\gamma
+\epsilon\KL(\gamma\mid\xi)
\right\},
\end{equation}
with value $+\infty$ when the admissible set is empty.
\end{definition}

The interaction between entropy and the upper bound is encoded by the
Fenchel conjugate of the entropy restricted to the interval $[0,h]$.

\begin{definition}[Capped-exponential conjugate]
\label{def:capped-conjugate}
For $\epsilon>0$, $h\ge0$, and $s\in\R$, define
\begin{equation}\label{eq:capped-conjugate-definition}
\Phi_{\epsilon,h}(s)
:=
\sup_{0\le q\le h}
\bigl\{sq-\epsilon\rho(q)\bigr\},
\qquad
\rho(q)=q\log q-q.
\end{equation}
Equivalently,
$\Phi_{\epsilon,h}=(\epsilon\rho+I_{[0,h]})^*$, where
$I_{[0,h]}$ is $0$ on $[0,h]$ and $+\infty$ outside this interval.
\end{definition}

\begin{proposition}[Explicit formula and derivative]
\label{prop:capped-conjugate}
If $h=0$, then $\Phi_{\epsilon,0}(s)=0$.  If $h>0$, then
\begin{equation}\label{eq:capped-conjugate-explicit}
\Phi_{\epsilon,h}(s)
=
\begin{cases}
\epsilon\exp(s/\epsilon),
& s\le\epsilon\log h,\\[0.4em]
hs-\epsilon h\log h+\epsilon h,
& s>\epsilon\log h.
\end{cases}
\end{equation}
Moreover, $\Phi_{\epsilon,h}$ is convex and continuously differentiable,
with
\begin{equation}\label{eq:capped-conjugate-derivative}
\frac{\d}{\d s}\Phi_{\epsilon,h}(s)
=\min\bigl\{h,\exp(s/\epsilon)\bigr\}.
\end{equation}
\end{proposition}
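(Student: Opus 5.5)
The plan is to compute the supremum in \eqref{eq:capped-conjugate-definition} directly by one-variable calculus, treating $h=0$ and $h>0$ separately, and then read off convexity and the derivative.

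\textbf{The case $h=0$.} The constraint set is $\{0\}$. With the convention $0\log0=0$ we have $\rho(0)=0$, so $\Phi_{\epsilon,0}\equiv0$.

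\textbf{The case $h>0$: locating the maximizer.} Fix $s$ and set $g(q):=sq-\epsilon\rho(q)$ on $[0,h]$. The function $g$ is continuous on $[0,h]$ and strictly concave, since $\rho''(q)=1/q>0$ on $(0,h]$. Its derivative is $g'(q)=s-\epsilon\log q$, which vanishes at $q_0=\exp(s/\epsilon)$. Moreover $g'(q)\to+\infty$ as $q\downarrow0$, so the maximizer is never at $0$. Hence the maximizer over $[0,h]$ is
\[
q^*(s)=\min\{h,\exp(s/\epsilon)\}.
\]
\begin{itemize}
\item If $s\le\epsilon\log h$, then $q_0\le h$. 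Substituting $\log q_0=s/\epsilon$ gives $g(q_0)=sq_0-\epsilon q_0(s/\epsilon)+\epsilon q_0=\epsilon\exp(s/\epsilon)$.
\item If $s>\epsilon\log h$, then $g$ is increasing on $[0,h]$, so the maximum is at $q=h$. This gives $g(h)=hs-\epsilon h\log h+\epsilon h$.
\end{itemize}
This establishes \eqref{eq:capped-conjugate-explicit}.

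\textbf{Convexity and the derivative.} $\Phi_{\epsilon,h}$ is a supremum of functions that are affine in $s$, so it is convex; for $h=0$ it is constant. Differentiating each branch of \eqref{eq:capped-conjugate-explicit} gives $\exp(s/\epsilon)$ for $s<\epsilon\log h$ and $h$ for $s>\epsilon\log h$.

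\textbf{The junction point.} At $s_0=\epsilon\log h$ we must check that the two branches glue in a $C^1$ way. The values agree, both equal to $\epsilon h$. The one-sided derivatives agree, both equal to $h$. Therefore $\Phi_{\epsilon,h}$ is differentiable at $s_0$, and its derivative $\min\{h,\exp(s/\epsilon)\}$ is continuous.

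Alternatively, the derivative formula follows from Danskin's theorem (the envelope theorem): the maximizer $q^*(s)$ is unique, so $\Phi_{\epsilon,h}'(s)=q^*(s)$. For $h=0$ the formula $\min\{0,\cdot\}=0$ is also consistent.

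The only delicate step is the junction check at $s_0$; everything else is routine calculus.
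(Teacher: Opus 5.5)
Your proposal is correct and follows essentially the same route as the paper: maximize $q\mapsto sq-\epsilon\rho(q)$ using the critical point $e^{s/\epsilon}$, clip at $q=h$ when $s>\epsilon\log h$, and check that the values and first derivatives of the two branches agree at $s=\epsilon\log h$. Your extra details fill in points the paper leaves implicit: convexity as a supremum of affine functions of $s$, the explicit $h=0$ case, and the Danskin/envelope alternative for the derivative.
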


\begin{proof}
For $q>0$, the derivative of
$q\mapsto sq-\epsilon(q\log q-q)$ is $s-\epsilon\log q$.
The unconstrained maximizer is $q=\exp(s/\epsilon)$.  If this value does not
exceed $h$, substitution gives the first branch in
\eqref{eq:capped-conjugate-explicit}; otherwise the maximum is attained at
$q=h$.  The values and first derivatives agree at $s=\epsilon\log h$.
\end{proof}

\begin{theorem}[Entropic duality with a hard capacity]
\label{thm:eccot-duality}
Assume that the admissible set in \eqref{eq:eccot-primal} is nonempty.  Then
the infimum is attained at a unique plan and
\begin{align}\label{eq:eccot-dual}
\ECCOT_{\epsilon,c}^{h,\xi}(\mu,\nu)
=\epsilon+
\sup_{u,v\in C(K)}
\Bigg\{
&\int_Ku\d\mu+\int_Kv\d\nu\nonumber\\
&-\int_{K\times K}
\Phi_{\epsilon,h(x,y)}
\bigl(u(x)+v(y)-c(x,y)\bigr)
\d\xi(x,y)
\Bigg\}.
\end{align}
\end{theorem}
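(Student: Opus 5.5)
We will follow the template of the proof of \Cref{thm:ccbc-duality}, replacing the linear objective by the strictly convex entropic one and using \Cref{prop:capped-conjugate} for the pointwise minimization.

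\textbf{Reduction to densities.} Since $\gamma\le h\xi$ forces $\gamma\ll\xi$, every admissible plan has the form $\gamma=q\xi$ with $q$ in
\[
\mathcal D_h:=\{q\in L^\infty(K\times K,\xi):0\le q\le h\ \xi\text{-a.e.}\}.
\]
This set is convex and weak-* compact, because $h$ is essentially bounded.

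By \eqref{eq:entropy-shift}, the primal objective equals
\[
\epsilon+\int (cq+\epsilon\rho(q))\d\xi .
\]
We then carry out the following steps.

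\emph{Existence.} The map $q\mapsto\int (cq+\epsilon\rho(q))\d\xi$ is convex and strongly continuous on $L^1(\xi)$. Indeed, $\rho$ is continuous and bounded on $[0,\|h\|_\infty]$, so dominated convergence applies along a.e.-convergent subsequences. A convex, strongly lower semicontinuous functional is weakly lower semicontinuous. The marginal constraints are closed affine constraints in the weak-* topology, since they are tested against $f(x)$ and $g(y)$ with $f,g\in C(K)$, and these lie in $L^1(\xi)$. The admissible set is nonempty by assumption, so a minimizer exists by compactness.

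\emph{Uniqueness.} Because $\rho$ is strictly convex on $[0,\infty)$, two distinct minimizers $q\neq q'$ on a set of positive $\xi$-measure would give their midpoint a strictly smaller objective. This is a contradiction.

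\emph{Duality.} As in \eqref{eq:barycentric-constraint-indicator}, the marginal constraints admit the indicator representation
\[
\sup_{u,v\in C(K)}\Bigl\{\int u\d\mu+\int v\d\nu-\int (u(x)+v(y))q\d\xi\Bigr\},
\]
which is $0$ if $q\xi\in\Pi(\mu,\nu)$ and $+\infty$ otherwise. Hence
\[
\ECCOT=\epsilon+\min_{q\in\mathcal D_h}\sup_{u,v}L(q,u,v),
\]
where
\[
L(q,u,v)=\int u\d\mu+\int v\d\nu+\int\bigl(c-u-v\bigr)q+\epsilon\rho(q)\d\xi .
\]
The Lagrangian $L$ is convex and weak-* lower semicontinuous in $q$ on the compact set $\mathcal D_h$, and affine (hence concave) in $(u,v)$. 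Sion's theorem therefore lets us exchange the min and the sup.

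For fixed $(u,v)$ the inner minimization over $q\in\mathcal D_h$ is pointwise. Writing $s=u(x)+v(y)-c(x,y)$, we have
\[
\min_{0\le q\le h}\{-sq+\epsilon\rho(q)\}=-\Phi_{\epsilon,h}(s).
\]
Substituting gives \eqref{eq:eccot-dual}. If $h(x,y)=0$ this is consistent with $\Phi_{\epsilon,0}=0$.

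\textbf{Main obstacle.} The delicate step is justifying the pointwise interchange of $\inf$ and $\int$. It requires measurable selection of the pointwise minimizer
\[
q(x,y)=\min\{h(x,y),\exp(s/\epsilon)\}.
\]
This function is measurable and lies in $\mathcal D_h$, so the interchange is legitimate. By \Cref{prop:capped-conjugate} the resulting integrand is finite, since $s$ is bounded and $h\in L^\infty$.

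A secondary subtlety is the weak-* lower semicontinuity of the entropy term on $\mathcal D_h$. This holds by convexity together with strong continuity, via Mazur's lemma; the nonnegative $c$-independent terms are handled the same way.
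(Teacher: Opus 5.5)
Your proposal is correct and follows essentially the same route as the paper. Both arguments use the same steps: reduction to densities on the weak-* compact order interval $\{0\le q\le h\}$, strict convexity for uniqueness, the indicator representation of the marginal constraints, Sion's minimax theorem, and pointwise minimization yielding $-\Phi_{\epsilon,h}$ with the measurable minimizer $\min\{h,\exp((u+v-c)/\epsilon)\}$.

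The one difference is how weak-* lower semicontinuity of $q\mapsto\int\rho(q)\d\xi$ is obtained. You use convexity plus $L^1$-continuity via Mazur. That continuity holds on the order interval, which is where your dominated-convergence argument applies, not on all of $L^1(\xi)$. The paper instead uses the representation $\int\rho(q)\d\xi=\sup_{a\in L^\infty(\xi)}\{\int aq\d\xi-\int e^a\d\xi\}$. Both justifications are valid.
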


\begin{proof}
Write $\gamma=q\xi$.  The feasible density set is weak-* compact, the cost is
weak-* continuous, and the entropy is weak-* lower semicontinuous and strictly
convex.  Thus the primal problem has a unique minimizer.  To derive the dual,
the marginal constraints have the indicator
representation
\[
\sup_{u,v\in C(K)}
\left\{
\int_Ku\d\mu+\int_Kv\d\nu
-\int_{K\times K}(u(x)+v(y))q(x,y)\d\xi(x,y)
\right\}.
\]
The order interval $\{q\in L^\infty(\xi):0\le q\le h\}$ is weak-* compact.
The map $q\mapsto\int\rho(q)\d\xi$ is weak-* lower semicontinuous on this
interval, since
\[
\int\rho(q)\d\xi
=
\sup_{a\in L^\infty(\xi)}
\left\{
\int aq\d\xi-\int e^a\d\xi
\right\}.
\]
Sion's minimax theorem therefore permits interchange of the minimum and the
supremum.  The pointwise minimization in $q$ is exactly the negative of
\eqref{eq:capped-conjugate-definition}.  The minimizing density
$\min\{h,\exp((u+v-c)/\epsilon)\}$ is measurable, so the pointwise minimum may
be integrated.  Restoring the constant in
\eqref{eq:entropy-shift} gives \eqref{eq:eccot-dual}.
\end{proof}

Entropy-based numerical methods for constrained two-marginal transport are
studied in \cite{wu2023double,tang2024sinkhorn}.  The preceding theorem is
included to record, in the fixed-reference notation needed below, the exact
hard-capacity conjugate that passes to the barycenter problem.

The capped conjugate also has the multiplier representation
\begin{equation}\label{eq:multiplier-representation}
-\Phi_{\epsilon,h}(s)
=
\sup_{w\le0}
\left\{
wh-\epsilon\exp\!\left(\frac{s+w}{\epsilon}\right)
\right\}.
\end{equation}
For $h>0$, the maximizing scalar is
\begin{equation}\label{eq:optimal-capacity-multiplier}
w^*(s,h)=\min\{0,\epsilon\log h-s\}.
\end{equation}
These identities follow by differentiating the right-hand side with respect
to $w$ and checking the boundary $w=0$.

\subsection{Entropy-regularized capacity-constrained barycenters}
\label{subsec:entropic-primal}

We now return to the fixed references $\xi_i=\nu_i\otimes\eta$ from
\eqref{eq:reference-measure} and impose the following compatibility condition.

\begin{assumption}[Capacity densities]
\label{ass:capacity-densities}
For each $i$, there exists
$h_i\in L^\infty_+(K\times K,\xi_i)$ such that
\begin{equation}\label{eq:capacity-density}
\widetilde\gamma_i=h_i\xi_i.
\end{equation}
\end{assumption}

This is an additional restriction on the capacities.  It is automatic in a
finite discrete model when $\xi_i$ assigns positive mass to every cell on
which $\widetilde\gamma_i$ is nonzero.  It is not automatic in a continuous
model and excludes capacities singular with respect to
$\nu_i\otimes\eta$.  The assumption is imposed so that the entropy and the
hard capacity can be expressed in the same density variable.

\begin{definition}[Entropy-regularized capacity-constrained barycenter]
\label{def:eccbc}
For $\epsilon>0$, define
\begin{align}\label{eq:eccbc-primal}
\ECCBC_{\epsilon,\eta}
&:=
\inf_{\nu\in\P(K)}
\sum_{i=1}^p\lambda_i
\ECCOT_{\epsilon,c_i}^{h_i,\xi_i}(\nu_i,\nu)\nonumber\\
&=
\inf_{\substack{\nu\in\P(K)\\
\gamma_i\in\Pi^{\widetilde\gamma_i}(\nu_i,\nu)}}
\sum_{i=1}^p\lambda_i
\left[
\int_{K\times K}c_i\d\gamma_i
+\epsilon\KL(\gamma_i\mid\xi_i)
\right].
\end{align}
A minimizer $\nu_\epsilon$ is called an entropy-regularized
capacity-constrained barycenter.
\end{definition}

Every admissible plan has the form $\gamma_i=q_i\xi_i$ with
$0\le q_i\le h_i$.  Its marginal constraints are
\begin{align}
\int_Kq_i(x,y)\d\eta(y)&=1
&&\text{for $\nu_i$-a.e. }x,
\label{eq:density-source-marginal}\\
\int_Kq_i(x,y)\d\nu_i(x)&=r(y)
&&\text{for $\eta$-a.e. }y,
\label{eq:density-common-marginal}
\end{align}
where $r=\d\nu/\d\eta$ is independent of $i$.

\begin{theorem}[Existence and uniqueness]
\label{thm:eccbc-existence-uniqueness}
Suppose \Cref{ass:capacity-densities} holds and the admissible set in
\eqref{eq:eccbc-primal} is nonempty.  Then the regularized problem has a
unique optimal tuple of plans and, consequently, a unique barycenter
$\nu_\epsilon$.
\end{theorem}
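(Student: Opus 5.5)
We will work entirely in the density variables $\boldsymbol q=(q_1,\ldots,q_p)$ with $\gamma_i=q_i\xi_i$. By \Cref{ass:capacity-densities}, the admissible set is
\[
\mathcal Q:=\Bigl\{\boldsymbol q\in\prod_{i=1}^p L^\infty(K\times K,\xi_i):\ 0\le q_i\le h_i,\ \text{\eqref{eq:density-source-marginal} holds for each }i,\ \text{the measures }(\pi_2)_\#(q_i\xi_i)\text{ coincide}\Bigr\}.
\]
The barycenter $\nu$ is recovered as the common second marginal. A common second marginal is automatically absolutely continuous with respect to $\eta$, since $(\pi_2)_\#(q_i\xi_i)\le\|h_i\|_\infty\eta$, so \eqref{eq:density-common-marginal} holds with some density $r$.

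The objective is
\[
F(\boldsymbol q)=\sum_i\lambda_i\Bigl[\int c_iq_i\d\xi_i+\epsilon+\epsilon\int\rho(q_i)\d\xi_i\Bigr],
\]
using \eqref{eq:entropy-shift}. The plan is the direct method in the weak-* topology of $L^\infty$, followed by a strict convexity argument, exactly parallel to the primal part of the proof of \Cref{thm:eccot-duality}.

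\emph{Step 1: compactness and convexity.} Each order interval $\{0\le q_i\le h_i\}$ is weak-* compact, because it is a closed bounded subset of $L^\infty(\xi_i)=L^1(\xi_i)^*$. The marginal constraints are closed under weak-* limits, since they are linear and tested against continuous functions. For the first marginal we test against $f(x)\in C(K)$. For equality of the second marginals we test $\int g(y)q_i\d\xi_i=\int g(y)q_j\d\xi_j$ for all $g\in C(K)$, and these integrands lie in $L^1(\xi_i)$. Hence $\mathcal Q$ is a weak-* closed subset of a compact product, so it is compact. It is also convex, by the argument of \Cref{prop:basic-properties}(ii), and nonempty by hypothesis.

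\emph{Step 2: existence.} The cost term is weak-* continuous because $c_i\in C(K\times K)\subset L^1(\xi_i)$. The entropy term is weak-* lower semicontinuous by the variational formula used in the proof of \Cref{thm:eccot-duality}: it is a supremum of weak-* continuous affine functionals. It is also finite on $\mathcal Q$, since $\rho$ is bounded on $[0,\|h_i\|_\infty]$. Therefore $F$ attains its minimum on $\mathcal Q$.

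\emph{Step 3: uniqueness.} Suppose $\boldsymbol q$ and $\boldsymbol q'$ are two minimizers and set $\bar{\boldsymbol q}=\tfrac12(\boldsymbol q+\boldsymbol q')\in\mathcal Q$. Since $\rho$ is strictly convex on $[0,\infty)$ and $\lambda_i,\epsilon>0$, we have $F(\bar{\boldsymbol q})<F(\boldsymbol q)$ unless $q_i=q_i'$ $\xi_i$-a.e.\ for every $i$. So the optimal tuple of plans $\gamma_i=q_i\xi_i$ is unique. Its common second marginal $\nu_\epsilon$ is then unique as well. Indeed, any minimizing $\nu$ in \eqref{eq:eccbc-primal} comes with an optimal tuple: \Cref{thm:eccot-duality} gives attainment of each inner problem, and $\nu$ feasible means each inner admissible set is nonempty. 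That tuple must be the unique one, so $\nu=\nu_\epsilon$.

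The main obstacle is the weak-* closedness of the coupled second-marginal constraint, together with the identification of the two formulations in \eqref{eq:eccbc-primal}. The key point is that $\nu$ is not a separate variable but is determined by $\boldsymbol q$. I would handle this by eliminating $\nu$ as above and checking that the test functions $g(y)$ and $f(x)$ belong to $L^1(\xi_i)$, which is immediate since $\xi_i$ is a probability measure.
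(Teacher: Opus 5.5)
Your proof is correct and follows the same argument as the paper: the direct method gives existence, and strict convexity of the weighted entropy gives uniqueness of the optimal tuple and hence of its common second marginal. The differences are minor: you work with densities in the weak-* topology of $L^\infty(\xi_i)$ (Banach--Alaoglu plus the variational formula for $\int\rho(q)\d\xi$), while the paper uses weak convergence of plans and cites lower semicontinuity of relative entropy, and the two topologies agree on the uniformly bounded order intervals; your extra step showing that any minimizing $\nu$ must be the second marginal of the unique optimal tuple makes explicit what the paper leaves as ``therefore unique.''
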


\begin{proof}
The feasible set of plan tuples is weakly compact: the domination
$0\le\gamma_i\le\widetilde\gamma_i$ gives tightness, and the domination and
marginal conditions are closed under weak convergence.  The transport costs
are weakly continuous because the $c_i$ are continuous on the compact set
$K\times K$, while relative entropy with respect to a fixed reference is
weakly lower semicontinuous \cite[Lemma~1.4.3]{dupuis2011weak}.  Hence a
minimizer exists.  The feasible set is
convex and the weighted sum of the fixed-reference relative entropies is
strictly convex, so the optimal tuple is unique.  Its common second marginal
is therefore unique.
\end{proof}

\subsection{Strong duality and capacity-clipped optimality relations}
\label{subsec:eccbc-duality}

The same potential class $\A_0$ used in the unregularized barycenter dual
encodes the source marginals and the common second marginal.

\begin{theorem}[Strong duality for the entropy-regularized
capacity-constrained barycenter]
\label{thm:eccbc-duality}
Suppose \Cref{ass:capacity-densities} holds and the admissible set in
\eqref{eq:eccbc-primal} is nonempty.  Then
\begin{align}\label{eq:eccbc-dual-capped}
\ECCBC_{\epsilon,\eta}
=\epsilon+
\sup_{(\boldsymbol\phi,\boldsymbol\psi)\in\A_0}
\sum_{i=1}^p\lambda_i
\Bigg[
\int_K\phi_i\d\nu_i
-\int_{K\times K}
\Phi_{\epsilon,h_i(x,y)}
\bigl(\phi_i(x)+\psi_i(y)-c_i(x,y)\bigr)
\d\xi_i(x,y)
\Bigg].
\end{align}
\end{theorem}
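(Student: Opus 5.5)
The proof will follow the minimax argument already used for \Cref{thm:ccbc-duality} and \Cref{thm:eccot-duality}. By \Cref{ass:capacity-densities}, every admissible plan has the form $\gamma_i=q_i\xi_i$ with $q_i$ in the order interval
\[
\mathcal Q_i:=\{q\in L^\infty(K\times K,\xi_i):0\le q\le h_i\}.
\]
Each $\mathcal Q_i$ is convex and weak-* compact, since $h_i$ is bounded. Using \eqref{eq:entropy-shift}, I rewrite the primal \eqref{eq:eccbc-primal} as $\epsilon$ plus the minimum over $\boldsymbol q\in\prod_i\mathcal Q_i$ of
\[
\sum_i\lambda_i\Bigl[\int c_iq_i\d\xi_i+\epsilon\int\rho(q_i)\d\xi_i\Bigr]
\]
subject to the marginal constraints \eqref{eq:density-source-marginal}--\eqref{eq:density-common-marginal}. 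The constant is exact because every admissible $\gamma_i$ is a probability measure. The marginal constraints are then replaced by the indicator representation \eqref{eq:barycentric-constraint-indicator}, with $\widetilde\gamma_i$ replaced by $\xi_i$; the separation argument given in the proof of \Cref{thm:ccbc-duality} applies verbatim. This yields
\[
\ECCBC_{\epsilon,\eta}-\epsilon=\min_{\boldsymbol q}\ \sup_{(\boldsymbol\phi,\boldsymbol\psi)\in\A_0} L(\boldsymbol q;\boldsymbol\phi,\boldsymbol\psi),
\]
where
\[
L=\sum_i\lambda_i\Bigl[\int\phi_i\d\nu_i+\int\bigl(c_i-\phi_i-\psi_i\bigr)q_i\d\xi_i+\epsilon\int\rho(q_i)\d\xi_i\Bigr].
\]

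The next step is to apply Sion's minimax theorem. The set $\A_0$ is a linear subspace, hence convex, and $L$ is affine in the potentials. In $\boldsymbol q$, the linear terms are weak-* continuous, because $c_i-\phi_i-\psi_i$ is bounded and continuous and therefore lies in $L^1(\xi_i)$. The entropy term is convex and weak-* lower semicontinuous on $\mathcal Q_i$; this follows from the variational formula for $\int\rho(q)\d\xi$ displayed in the proof of \Cref{thm:eccot-duality}. Sion's theorem requires only lower semicontinuity and convexity in the compact variable, so min and sup can be interchanged. The minimum on the left is attained by \Cref{thm:eccbc-existence-uniqueness}, and the nonemptiness hypothesis makes the value finite.

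After the interchange, I minimize in $\boldsymbol q$ for fixed potentials. The problem decouples across $i$ and pointwise in $(x,y)$. Writing $s=\phi_i(x)+\psi_i(y)-c_i(x,y)$, one has
\[
\min_{0\le q\le h_i(x,y)}\{-sq+\epsilon\rho(q)\}=-\Phi_{\epsilon,h_i(x,y)}(s)
\]
by \Cref{def:capped-conjugate}. The pointwise minimizer $\min\{h_i,\exp(s/\epsilon)\}$ given by \Cref{prop:capped-conjugate} is measurable and lies in $\mathcal Q_i$. Hence the infimum over $\mathcal Q_i$ of the integral equals the integral of the pointwise infimum: one inequality is trivial, and the other is attained by this minimizer. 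Integrability is not an issue, since $s$ is bounded and $h_i$ is essentially bounded, so $\Phi_{\epsilon,h_i}(s)$ is bounded by the explicit formula \eqref{eq:capped-conjugate-explicit}. Substituting and adding back $\epsilon$ gives \eqref{eq:eccbc-dual-capped}.

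The main obstacle is the careful verification of Sion's hypotheses: weak-* compactness of $\prod_i\mathcal Q_i$ and lower semicontinuity of the entropy part, which I will handle as in \Cref{thm:eccot-duality}. A secondary point is checking that the indicator representation yields $+\infty$ exactly off the admissible set when the potentials are restricted to continuous functions. For this I will use the fact that two finite measures on compact $K$ agreeing against all of $C(K)$ coincide. For the common second marginal, take $\psi_i=\lambda_i^{-1}g_i$ with $\sum_ig_i=0$, choosing $g_i$ to separate the mismatched marginals.
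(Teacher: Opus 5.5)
Your proposal is correct and follows essentially the same route as the paper's proof. Both arguments rewrite the problem over the weak-* compact order intervals $\{0\le q_i\le h_i\}$ using the entropy shift, encode the source and common-target marginal constraints by the indicator supremum over $\A_0$, interchange minimum and supremum by Sion's theorem (with weak-* lower semicontinuity of the entropy from its variational formula), and evaluate the inner minimum pointwise as $-\Phi_{\epsilon,h_i}(\phi_i+\psi_i-c_i)$ via the measurable minimizer $\min\{h_i,\exp((\phi_i+\psi_i-c_i)/\epsilon)\}$; your explicit zero-sum separating potentials and the boundedness check on the integrand simply fill in steps the paper only sketches.
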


\begin{proof}
For each $i$, set
\[
\mathcal D_i^h
:=
\{q\in L^\infty(K\times K,\xi_i):0\le q\le h_i\}.
\]
This is a convex weak-* compact subset of $L^\infty(\xi_i)$.  For
$\boldsymbol q\in\prod_i\mathcal D_i^h$, the marginal and common-target
constraints have the indicator representation
\begin{align}\label{eq:entropic-barycentric-constraint-indicator}
\mathcal I_\xi(\boldsymbol q)
:=\sup_{(\boldsymbol\phi,\boldsymbol\psi)\in\A_0}
\sum_{i=1}^p\lambda_i
\left[
\int_K\phi_i\d\nu_i
-\int_{K\times K}
(\phi_i(x)+\psi_i(y))q_i(x,y)\d\xi_i(x,y)
\right].
\end{align}
The same separation argument as in
\eqref{eq:barycentric-constraint-indicator} shows that
$\mathcal I_\xi$ is zero precisely for tuples with the prescribed source
marginals and a common second marginal, and is $+\infty$ otherwise.  Applying
\eqref{eq:entropy-shift} on the feasible tuples therefore gives
\begin{align*}
\ECCBC_{\epsilon,\eta}-\epsilon
=
\min_{\boldsymbol q\in\prod_i\mathcal D_i^h}
\sup_{(\boldsymbol\phi,\boldsymbol\psi)\in\A_0}
\sum_{i=1}^p\lambda_i
\Bigg[
\int_K\phi_i\d\nu_i
+\int_{K\times K}
\Bigl((c_i-\phi_i-\psi_i)q_i+\epsilon\rho(q_i)\Bigr)
\d\xi_i
\Bigg].
\end{align*}
For fixed potentials, the functional is convex and weak-* lower
semicontinuous in $\boldsymbol q$.  The lower semicontinuity follows, for
example, from the variational representation
\[
\int\rho(q_i)\d\xi_i
=
\sup_{a\in L^\infty(\xi_i)}
\left\{
\int aq_i\d\xi_i-\int e^a\d\xi_i
\right\}.
\]
For fixed $\boldsymbol q$, the functional is affine and continuous in the
potentials.  Sion's minimax theorem therefore gives
\[
\min_{\boldsymbol q}\sup_{(\boldsymbol\phi,\boldsymbol\psi)}
=
\sup_{(\boldsymbol\phi,\boldsymbol\psi)}\min_{\boldsymbol q}.
\]
The inner minimization is pointwise, and
\[
\min_{0\le q\le h_i}
\left\{(c_i-\phi_i-\psi_i)q+\epsilon\rho(q)\right\}
=
-\Phi_{\epsilon,h_i}(\phi_i+\psi_i-c_i).
\]
The measurable minimizer is
$\min\{h_i,\exp((\phi_i+\psi_i-c_i)/\epsilon)\}$, which justifies integrating
the pointwise minimum.  Restoring the additive constant proves \eqref{eq:eccbc-dual-capped}.
\end{proof}

\begin{remark}[Dual attainment]
\label{rem:continuous-dual-attainment}
\Cref{thm:eccbc-duality} proves equality of the primal and dual values.  It
does not by itself prove that the supremum is attained by continuous
potentials, and no such continuous-attainment claim is made here.  In a
finite-dimensional discrete problem, standard convex duality gives dual
attainment under a relative-interior feasibility condition for the box
$0\le q_i\le h_i$.
\end{remark}

\begin{corollary}[Capacity-multiplier form]
\label{cor:eccbc-multiplier-dual}
Assume, in addition, that $h_i\in C(K\times K)$ and
$h_i\ge\underline h_i>0$ for every $i$.  Then
\begin{align}\label{eq:eccbc-dual-multiplier}
\ECCBC_{\epsilon,\eta}
=\epsilon+
\sup\sum_{i=1}^p\lambda_i
\Bigg[
&\int_K\phi_i\d\nu_i
+\int_{K\times K}w_i\d\widetilde\gamma_i\nonumber\\
&-\epsilon\int_{K\times K}
\exp\!\left(
\frac{\phi_i(x)+\psi_i(y)+w_i(x,y)-c_i(x,y)}{\epsilon}
\right)
\d\xi_i(x,y)
\Bigg],
\end{align}
where the supremum is over $\phi_i,\psi_i\in C(K)$ and
$w_i\in C(K\times K)$ satisfying
$w_i\le0$ and $\sum_i\lambda_i\psi_i=0$.
\end{corollary}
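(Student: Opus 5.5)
The plan is to deduce \eqref{eq:eccbc-dual-multiplier} from \Cref{thm:eccbc-duality}. For fixed potentials, the $w_i$-supremum is evaluated pointwise by the multiplier representation \eqref{eq:multiplier-representation}. Write $s_i(x,y):=\phi_i(x)+\psi_i(y)-c_i(x,y)$ and
\[
G_i(w):=\int_{K\times K}\Bigl(w\,h_i-\epsilon\exp\bigl((s_i+w)/\epsilon\bigr)\Bigr)\d\xi_i .
\]
Because $\widetilde\gamma_i=h_i\xi_i$, we have $\int w_i\d\widetilde\gamma_i=\int w_ih_i\d\xi_i$. Hence the bracket in \eqref{eq:eccbc-dual-multiplier} equals $\int\phi_i\d\nu_i+G_i(w_i)$.

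\emph{Inequality ``$\le$''.} For every admissible $w_i\le0$, \eqref{eq:multiplier-representation} gives, pointwise,
\[
w_ih_i-\epsilon\exp\bigl((s_i+w_i)/\epsilon\bigr)\le-\Phi_{\epsilon,h_i}(s_i).
\]
Integrating against $\xi_i$ gives $G_i(w_i)\le-\int\Phi_{\epsilon,h_i}(s_i)\d\xi_i$. Summing with weights $\lambda_i$ and taking the supremum shows that the right side of \eqref{eq:eccbc-dual-multiplier} is at most the right side of \eqref{eq:eccbc-dual-capped}, which equals $\ECCBC_{\epsilon,\eta}$ by \Cref{thm:eccbc-duality}.

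\emph{Inequality ``$\ge$''.} Fix $(\boldsymbol\phi,\boldsymbol\psi)\in\A_0$ and use the maximizing multiplier \eqref{eq:optimal-capacity-multiplier},
\[
w_i^*(x,y):=\min\{0,\epsilon\log h_i(x,y)-s_i(x,y)\}.
\]
This is where the extra hypotheses enter. Since $h_i$ is continuous and $h_i\ge\underline h_i>0$, $\log h_i$ is continuous on $K\times K$, so $w_i^*\in C(K\times K)$ and $w_i^*\le0$. The scalar identity gives, pointwise,
\[
w_i^*h_i-\epsilon\exp\bigl((s_i+w_i^*)/\epsilon\bigr)=-\Phi_{\epsilon,h_i}(s_i).
\]
We then check this on both regions. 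Where $s_i\le\epsilon\log h_i$, we have $w_i^*=0$ and the left side is $-\epsilon e^{s_i/\epsilon}$. Where $s_i>\epsilon\log h_i$, the left side is $h_i(\epsilon\log h_i-s_i)-\epsilon h_i$. Both agree with $-\Phi_{\epsilon,h_i}(s_i)$ by \eqref{eq:capped-conjugate-explicit}. Integrating gives
\[
G_i(w_i^*)=-\int\Phi_{\epsilon,h_i}(s_i)\d\xi_i .
\]
So each value in \eqref{eq:eccbc-dual-capped} is attained by an admissible triple in \eqref{eq:eccbc-dual-multiplier}, and the reverse inequality follows.

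The main obstacle is making sure $w_i^*$ belongs to the continuous class. Without positivity and continuity of $h_i$, $\log h_i$ could be $-\infty$ or discontinuous, and one would need an approximation argument by continuous nonpositive multipliers using monotone or dominated convergence. Under the stated hypotheses this issue disappears. Integrability of all terms is immediate, because the integrands are bounded on the compact set $K\times K$ and $\xi_i$ is a probability measure.
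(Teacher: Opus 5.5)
Your proof is correct and follows the same route as the paper: you apply the multiplier representation \eqref{eq:multiplier-representation} pointwise with $s=\phi_i(x)+\psi_i(y)-c_i(x,y)$, use $\widetilde\gamma_i=h_i\xi_i$, and observe that continuity and positivity of $h_i$ make the maximizer $w_i^*=\min\{0,\epsilon\log h_i-s_i\}$ an admissible continuous multiplier. You simply spell out the two inequalities and the branch-by-branch check that the paper's one-line proof leaves implicit.
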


\begin{proof}
Apply \eqref{eq:multiplier-representation} pointwise with
$s=\phi_i(x)+\psi_i(y)-c_i(x,y)$ and use
$\widetilde\gamma_i=h_i\xi_i$.  Under the additional assumptions, the
maximizer
$w_i=\min\{0,\epsilon\log h_i-s\}$ is continuous, so the pointwise
supremum is realized in $C(K\times K)$.
\end{proof}

\begin{theorem}[Capacity-clipped primal--dual relations]
\label{thm:eccbc-kkt}
Assume the hypotheses of \Cref{thm:eccbc-duality}, and suppose that
$(\phi_i^*,\psi_i^*)_{i=1}^p$ is a dual maximizer in
\eqref{eq:eccbc-dual-capped}.  Let
$(\gamma_{i,\epsilon})_{i=1}^p$ be the unique primal optimizer and write
$\gamma_{i,\epsilon}=q_{i,\epsilon}\xi_i$ and
$\nu_\epsilon=r_\epsilon\eta$.  Then
\begin{equation}\label{eq:clipped-gibbs-density}
q_{i,\epsilon}(x,y)
=
\min\left\{
 h_i(x,y),
 \exp\!\left(
 \frac{\phi_i^*(x)+\psi_i^*(y)-c_i(x,y)}{\epsilon}
 \right)
\right\}
\qquad\xi_i\text{-a.e.}
\end{equation}
and
\begin{align}
\int_Kq_{i,\epsilon}(x,y)\d\eta(y)&=1
&&\nu_i\text{-a.e.},
\label{eq:clipped-schrodinger-row}\\
\int_Kq_{i,\epsilon}(x,y)\d\nu_i(x)&=r_\epsilon(y)
&&\eta\text{-a.e.},
\label{eq:clipped-schrodinger-column}\\
\sum_{i=1}^p\lambda_i\psi_i^*(y)&=0
&&\text{for every }y\in K.
\label{eq:clipped-schrodinger-barycenter}
\end{align}
On the set $E_i:=\{h_i>0\}$, define the measurable capacity field
\begin{equation}\label{eq:optimal-capacity-field}
w_i^*(x,y)
:=
\min\left\{
0,
\epsilon\log h_i(x,y)
-\phi_i^*(x)-\psi_i^*(y)+c_i(x,y)
\right\}.
\end{equation}
Then
\begin{equation}\label{eq:capacity-complementarity}
w_i^*\le0,
\qquad
q_{i,\epsilon}\le h_i,
\qquad
w_i^*(h_i-q_{i,\epsilon})=0
\quad\xi_i\text{-a.e. on }E_i.
\end{equation}
On $\{h_i=0\}$, one has $q_{i,\epsilon}=0$.
\end{theorem}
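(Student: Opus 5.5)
The plan is to combine strong duality (\Cref{thm:eccbc-duality}) with weak duality, and to use the resulting zero duality gap to force pointwise Fenchel equality.

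Fix the primal optimizer $\gamma_{i,\epsilon}=q_{i,\epsilon}\xi_i$, with common second marginal $\nu_\epsilon=r_\epsilon\eta$, and the dual maximizer $(\boldsymbol\phi^*,\boldsymbol\psi^*)\in\A_0$. Relations \eqref{eq:clipped-schrodinger-row} and \eqref{eq:clipped-schrodinger-column} are simply the primal feasibility conditions \eqref{eq:density-source-marginal}--\eqref{eq:density-common-marginal}. Relation \eqref{eq:clipped-schrodinger-barycenter} is membership in $\A_0$.

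For the Gibbs formula, set $s_i:=\phi_i^*(x)+\psi_i^*(y)-c_i(x,y)$ and write the Fenchel--Young inequality pointwise from \Cref{def:capped-conjugate}. Since $0\le q_{i,\epsilon}\le h_i$,
\[
s_iq_{i,\epsilon}-\epsilon\rho(q_{i,\epsilon})\le\Phi_{\epsilon,h_i}(s_i)\qquad\xi_i\text{-a.e.}
\]
Integrate this against $\xi_i$, multiply by $\lambda_i$, and sum over $i$. The terms $\int\phi_i^*q_{i,\epsilon}\d\xi_i$ become $\int\phi_i^*\d\nu_i$. The terms $\int\psi_i^*q_{i,\epsilon}\d\xi_i$ become $\int\psi_i^*\d\nu_\epsilon$, and these cancel in the weighted sum because $\sum_i\lambda_i\psi_i^*=0$. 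This is where the common marginal is used. Using \eqref{eq:entropy-shift}, the result is
\[
\text{primal value}-\epsilon\ \ge\ \text{dual objective at }(\boldsymbol\phi^*,\boldsymbol\psi^*),
\]
with the gap equal to $\sum_i\lambda_i\int\bigl[\Phi_{\epsilon,h_i}(s_i)-s_iq_{i,\epsilon}+\epsilon\rho(q_{i,\epsilon})\bigr]\d\xi_i$.

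By \Cref{thm:eccbc-duality} and optimality of both sides, this gap is zero. The integrand is nonnegative, and each $\lambda_i>0$, so the integrand vanishes $\xi_i$-a.e. Integrability needs a brief check: $\phi_i^*,\psi_i^*,c_i$ are bounded, $h_i\in L^\infty$, and $\rho$ is bounded on $[0,\|h_i\|_\infty]$, so everything is finite.

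Pointwise equality in Fenchel--Young means that $q_{i,\epsilon}(x,y)$ maximizes $q\mapsto s_iq-\epsilon\rho(q)$ over $[0,h_i(x,y)]$. This function is strictly concave, so the maximizer is unique. By the proof of \Cref{prop:capped-conjugate}, that maximizer is $\min\{h_i,e^{s_i/\epsilon}\}$ when $h_i>0$, and $0$ when $h_i=0$. This gives \eqref{eq:clipped-gibbs-density} and the final claim on $\{h_i=0\}$. The main obstacle is this step: the duality gap must be exactly zero and must be rewritten as the integral of a nonnegative quantity. That requires the $\psi$-cancellation and the constant $\epsilon$ from \eqref{eq:entropy-shift} to be handled correctly.

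Finally, the complementarity statement \eqref{eq:capacity-complementarity} on $E_i$ is elementary. The inequality $w_i^*\le0$ holds by definition, and $q_{i,\epsilon}\le h_i$ by feasibility. For $w_i^*(h_i-q_{i,\epsilon})=0$, consider two cases.
\begin{itemize}
\item If $s_i\le\epsilon\log h_i$, then $w_i^*=0$.
\item If $s_i>\epsilon\log h_i$, then $e^{s_i/\epsilon}>h_i$, so the Gibbs formula gives $q_{i,\epsilon}=h_i$.
\end{itemize}
In either case the product vanishes $\xi_i$-a.e. Measurability of $w_i^*$ is immediate from measurability of $h_i$ and continuity of the potentials.
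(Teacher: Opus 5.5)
Your proposal is correct and follows essentially the same route as the paper. The zero duality gap from \Cref{thm:eccbc-duality} forces $q_{i,\epsilon}$ to attain the pointwise supremum defining $\Phi_{\epsilon,h_i}$, strict concavity identifies this maximizer as the clipped Gibbs density, and the remaining relations are primal feasibility and membership in $\A_0$. You make explicit the Fenchel--Young gap computation (the $\psi$-cancellation and the additive constant $\epsilon$) that the paper compresses into one sentence, and your two-case check of complementarity replaces the paper's appeal to the optimal multiplier in \eqref{eq:optimal-capacity-multiplier}.
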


\begin{proof}
Equality of the primal and dual values forces $q_{i,\epsilon}$ to minimize
the pointwise expression used in the proof of
\Cref{thm:eccbc-duality}.  Strict convexity in $q$ and
\eqref{eq:capped-conjugate-derivative} give
\eqref{eq:clipped-gibbs-density}.  The remaining equations are the primal
marginal constraints and the barycentric dual constraint.  On $E_i$, formula \eqref{eq:optimal-capacity-field} is the optimizer in
\eqref{eq:optimal-capacity-multiplier}, and
\eqref{eq:capacity-complementarity} follows immediately.
\end{proof}

\begin{remark}[Capacity saturation and algorithms]
\label{rem:capacity-saturation}
On $E_i=\{h_i>0\}$, where
$\phi_i^*(x)+\psi_i^*(y)-c_i(x,y)<\epsilon\log h_i(x,y)$,
the capacity is inactive and the density has the usual Gibbs form.  At or
above the threshold, the density equals $h_i$ and the capacity is saturated.
On $\{h_i=0\}$, the density vanishes.  This clipped relation suggests a
capacity-aware scaling procedure, but no iterative algorithm or convergence
theorem is proved in this paper.
\end{remark}

\section{The zero-regularization limit and entropy selection}
\label{sec:zero-limit}

Throughout this section, retain \Cref{ass:capacity-densities} and let
$\CCBC$ denote the unregularized value with the same capacities
$\widetilde\gamma_i=h_i\xi_i$.

\begin{proposition}[Convergence of the dual penalty]
\label{prop:dual-penalty-limit}
For every $h\ge0$ and $s\in\R$,
\begin{equation}\label{eq:dual-penalty-limit}
\lim_{\epsilon\downarrow0}\Phi_{\epsilon,h}(s)
=h\pos{s}.
\end{equation}
Thus the capped-exponential penalty is a continuously differentiable
approximation of the positive-part capacity penalty.
\end{proposition}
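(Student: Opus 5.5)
The plan is to read the limit directly off the explicit formula in \Cref{prop:capped-conjugate}. The case $h=0$ is immediate, since $\Phi_{\epsilon,0}\equiv0=0\cdot\pos{s}$ for every $\epsilon>0$. So assume $h>0$ and fix $s\in\R$. The key observation is that the threshold $\epsilon\log h$ separating the two branches of \eqref{eq:capped-conjugate-explicit} tends to $0$ as $\epsilon\downarrow0$. Hence for $s\neq0$ the point $s$ eventually lies strictly on one side of the threshold, and only one branch matters for all small $\epsilon$.

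\emph{Case $s<0$.} For all sufficiently small $\epsilon$ we have $s<\epsilon\log h$, which holds because $\epsilon\lvert\log h\rvert<\lvert s\rvert$ eventually. Then $\Phi_{\epsilon,h}(s)=\epsilon\exp(s/\epsilon)$. Since $\exp(s/\epsilon)\le1$, this value lies in $[0,\epsilon]$ and tends to $0=h\pos{s}$.

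\emph{Case $s>0$.} Similarly, $s>\epsilon\log h$ eventually. Then $\Phi_{\epsilon,h}(s)=hs-\epsilon h\log h+\epsilon h$, which tends to $hs=h\pos{s}$ because $h$ and $\log h$ are fixed finite numbers.

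\emph{Case $s=0$.} This is the only point needing care, since which branch applies depends on the sign of $\log h$. If $h\ge1$, then $0\le\epsilon\log h$, and $\Phi_{\epsilon,h}(0)=\epsilon\to0$. If $h<1$, then $\Phi_{\epsilon,h}(0)=\epsilon h(1-\log h)\to0$. In both subcases the limit is $0=h\pos{0}$.

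Alternatively, one can avoid the case split with a uniform two-sided bound taken from the variational definition \eqref{eq:capped-conjugate-definition}. Since $-\epsilon\rho(q)=\epsilon q(1-\log q)$ is bounded in absolute value on $[0,h]$ by $\epsilon C_h$, with $C_h$ depending only on $h$, we get
\[
\Bigl\lvert\Phi_{\epsilon,h}(s)-\sup_{0\le q\le h}sq\Bigr\rvert\le\epsilon C_h .
\]
Moreover $\sup_{0\le q\le h}sq=h\pos{s}$. This gives the limit, even uniformly in $s$, and is the route I would actually write, with the explicit formula as a cross-check.

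The main (minor) obstacle is just the bookkeeping at the threshold and at $s=0$, which the uniform bound removes. The closing sentence of the statement, that $\Phi_{\epsilon,h}$ is a $C^1$ approximation, then follows from \Cref{prop:capped-conjugate}, which gives continuous differentiability for each $\epsilon>0$.
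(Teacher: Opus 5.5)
Your argument is correct. The case split you give first, reading the limit off \eqref{eq:capped-conjugate-explicit} using that the threshold $\epsilon\log h$ tends to $0$, is exactly the paper's proof. Your treatment of $s=0$, splitting on $h\ge1$ versus $h<1$, just makes explicit what the paper summarizes as ``either branch converges to zero.''

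The route you say you would actually write is genuinely different. It bypasses the closed form and works from the variational definition \eqref{eq:capped-conjugate-definition}. Using $\lvert\sup(f+g)-\sup f\rvert\le\sup\lvert g\rvert$ with $f(q)=sq$ and $g=-\epsilon\rho$ on $[0,h]$, you get $\lvert\Phi_{\epsilon,h}(s)-h\pos{s}\rvert\le\epsilon C_h$, where $C_h=\sup_{0\le q\le h}\lvert q(1-\log q)\rvert<\infty$.

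This is more informative than the paper's pointwise statement. The convergence is uniform in $s$ with an explicit $O(\epsilon)$ rate. Since $C_h$ is nondecreasing in $h$, it is also uniform over $0\le h\le\lVert h_i\rVert_\infty$. One could therefore integrate against $\xi_i$ and compare \eqref{eq:eccbc-dual-capped} with \eqref{eq:unregularized-positive-part-dual-density} objective by objective. This would give $\lvert\ECCBC_{\epsilon,\eta}-\epsilon-\CCBC\rvert\le\epsilon\sum_i\lambda_iC_{\lVert h_i\rVert_\infty}$, a dual-side counterpart of the primal estimate \eqref{eq:value-rate} that does not involve the entropy of the selected tuple.

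The paper's case analysis, in turn, is the shortest path once \Cref{prop:capped-conjugate} is available. It also keeps visible which branch (unsaturated or saturated) is active, which is the same threshold structure behind the clipped Gibbs formula.
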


\begin{proof}
If $h=0$, the claim is immediate.  Assume $h>0$.  If $s<0$, then for all
sufficiently small $\epsilon$ the first branch of
\eqref{eq:capped-conjugate-explicit} applies and
$\epsilon e^{s/\epsilon}\to0$.  If $s=0$, either branch converges to zero.
If $s>0$, then for all sufficiently small $\epsilon$ the second branch
applies and
\[
\Phi_{\epsilon,h}(s)
=hs-\epsilon h\log h+\epsilon h
\longrightarrow hs.
\]
\end{proof}

Since $\widetilde\gamma_i=h_i\xi_i$, the unregularized dual can be written as
\begin{align}\label{eq:unregularized-positive-part-dual-density}
\CCBC
=
\sup_{(\boldsymbol\phi,\boldsymbol\psi)\in\A_0}
\sum_{i=1}^p\lambda_i
\left[
\int_K\phi_i\d\nu_i
-\int_{K\times K}
h_i(x,y)\pos{\phi_i(x)+\psi_i(y)-c_i(x,y)}
\d\xi_i(x,y)
\right].
\end{align}
Thus \Cref{prop:dual-penalty-limit} gives the pointwise connection between
the regularized and unregularized dual penalties.

Let $\mathcal S_0$ denote the set of unregularized optimal plan tuples:
\begin{equation}\label{eq:unregularized-optimal-set}
\mathcal S_0
:=
\left\{
(\gamma_1,\ldots,\gamma_p):
\begin{array}{l}
\text{there exists }\nu\in\P(K)\text{ such that }
\gamma_i\in\Pi^{\widetilde\gamma_i}(\nu_i,\nu),\\[0.2em]
\displaystyle
\sum_{i=1}^p\lambda_i\int_{K\times K}c_i\d\gamma_i
=\CCBC
\end{array}
\right\}.
\end{equation}
For a feasible tuple $\boldsymbol\gamma=(\gamma_1,\ldots,\gamma_p)$, set
\begin{equation}\label{eq:weighted-entropy-functional}
\mathcal H(\boldsymbol\gamma)
:=
\sum_{i=1}^p\lambda_i\KL(\gamma_i\mid\xi_i).
\end{equation}

\begin{theorem}[Zero regularization and entropy selection]
\label{thm:eccbc-zero-limit}
Assume \Cref{ass:capacity-densities} and that the common feasible set is
nonempty.  There exists a unique tuple
$\overline{\boldsymbol\gamma}
=(\overline\gamma_1,\ldots,\overline\gamma_p)\in\mathcal S_0$
minimizing $\mathcal H$ over $\mathcal S_0$.  If $\overline\nu$ is its
common second marginal, then
\begin{equation}\label{eq:value-rate}
0
\le
\ECCBC_{\epsilon,\eta}-\CCBC
\le
\epsilon\mathcal H(\overline{\boldsymbol\gamma})
\qquad\text{for every }\epsilon>0,
\end{equation}
and, as $\epsilon\downarrow0$,
\begin{equation}\label{eq:entropy-selected-convergence}
\gamma_{i,\epsilon}\rightharpoonup\overline\gamma_i
\quad\text{for every }i,
\qquad
\nu_\epsilon\rightharpoonup\overline\nu.
\end{equation}
In particular, the whole regularized family converges to the unique
entropy-minimizing unregularized optimal tuple.
\end{theorem}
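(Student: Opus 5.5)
The plan is the standard $\Gamma$-convergence/selection argument. We work on the set $\mathcal F$ of feasible tuples $\boldsymbol\gamma=(\gamma_1,\ldots,\gamma_p)$ with $\gamma_i\in\Pi^{\widetilde\gamma_i}(\nu_i,\nu)$ for a common $\nu\in\P(K)$. As in the proof of \Cref{thm:eccbc-existence-uniqueness} and \Cref{prop:feasible-compact}, $\mathcal F$ is convex and weakly compact. The marginal conditions, the common second marginal and the domination $\gamma_i\le\widetilde\gamma_i$ all pass to weak limits, by testing against nonnegative $f\in C(K\times K)$ as in \Cref{lem:cc-lsc}.

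\emph{Step 1: finiteness and boundedness of $\mathcal H$ on $\mathcal F$.} Every $\gamma_i\in\Pi^{\widetilde\gamma_i}(\nu_i,\nu)$ has the form $\gamma_i=q_i\xi_i$ with $0\le q_i\le h_i\le\lVert h_i\rVert_\infty$. Hence
\[
0\le\KL(\gamma_i\mid\xi_i)=\int q_i\log q_i\d\xi_i\le\log\lVert h_i\rVert_\infty .
\]
Nonnegativity holds because both $\gamma_i$ and $\xi_i$ are probability measures. So $\mathcal H$ is finite and uniformly bounded on $\mathcal F$. Moreover, $\mathcal H$ is strictly convex and weakly lower semicontinuous \cite[Lemma~1.4.3]{dupuis2011weak}.

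\emph{Step 2: the selected tuple.} The cost $\boldsymbol\gamma\mapsto\sum_i\lambda_i\int c_i\d\gamma_i$ is weakly continuous on $\mathcal F$. Therefore $\mathcal S_0$, the set where it attains $\CCBC$, is nonempty by \Cref{thm:ccbc-existence}. It is also weakly closed, hence compact, and it is convex, being a level set of an affine functional on a convex set. A strictly convex, lower semicontinuous, finite function on a nonempty convex compact set has a unique minimizer; call it $\overline{\boldsymbol\gamma}$, with common second marginal $\overline\nu$.

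\emph{Step 3: the rate \eqref{eq:value-rate}.} For the lower bound, $\KL\ge0$ gives, for every feasible tuple,
\[
\sum_i\lambda_i\Bigl[\int c_i\d\gamma_i+\epsilon\KL(\gamma_i\mid\xi_i)\Bigr]\ge\sum_i\lambda_i\int c_i\d\gamma_i\ge\CCBC .
\]
For the upper bound, we test \eqref{eq:eccbc-primal} with $\overline{\boldsymbol\gamma}$, which gives $\ECCBC_{\epsilon,\eta}\le\CCBC+\epsilon\mathcal H(\overline{\boldsymbol\gamma})$.

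\emph{Step 4: convergence.} Let $\boldsymbol\gamma_\epsilon$ be the optimizer from \Cref{thm:eccbc-existence-uniqueness}, and write $C(\boldsymbol\gamma):=\sum_i\lambda_i\int c_i\d\gamma_i$. Combining optimality with Step 3 gives
\[
C(\boldsymbol\gamma_\epsilon)+\epsilon\mathcal H(\boldsymbol\gamma_\epsilon)\le\CCBC+\epsilon\mathcal H(\overline{\boldsymbol\gamma}),
\qquad
C(\boldsymbol\gamma_\epsilon)\ge\CCBC .
\]
Two consequences follow: $C(\boldsymbol\gamma_\epsilon)\to\CCBC$, and, after dividing by $\epsilon$, $\mathcal H(\boldsymbol\gamma_\epsilon)\le\mathcal H(\overline{\boldsymbol\gamma})$.

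Take any sequence $\epsilon_n\downarrow0$. By compactness of $\mathcal F$, a subsequence satisfies $\boldsymbol\gamma_{\epsilon_n}\rightharpoonup\boldsymbol\gamma^0\in\mathcal F$. Continuity of the cost gives $C(\boldsymbol\gamma^0)=\CCBC$, so $\boldsymbol\gamma^0\in\mathcal S_0$. Lower semicontinuity gives $\mathcal H(\boldsymbol\gamma^0)\le\liminf\mathcal H(\boldsymbol\gamma_{\epsilon_n})\le\mathcal H(\overline{\boldsymbol\gamma})$. Uniqueness from Step 2 then forces $\boldsymbol\gamma^0=\overline{\boldsymbol\gamma}$.

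Since every sequence has a further subsequence with the same limit, the whole family converges: $\gamma_{i,\epsilon}\rightharpoonup\overline\gamma_i$ for each $i$. Pushing forward by the continuous projection $\pi_2$ gives $\nu_\epsilon\rightharpoonup\overline\nu$.

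The only delicate point is Step 1. Finiteness of $\mathcal H$ on all of $\mathcal S_0$ is needed both for the rate and for the comparison $\mathcal H(\boldsymbol\gamma_\epsilon)\le\mathcal H(\overline{\boldsymbol\gamma})$. It is exactly here that the bounded density $h_i$ from \Cref{ass:capacity-densities} is used. The remaining steps are routine compactness and lower semicontinuity arguments.
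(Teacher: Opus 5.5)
Your proposal is correct and follows essentially the same route as the paper. Both arguments use compactness and convexity of the feasible tuples, and finiteness and strict convexity of $\mathcal H$ coming from the bounded densities $h_i$. Both then test with $\overline{\boldsymbol\gamma}$ to obtain the $O(\epsilon)$ bound and $\mathcal H(\boldsymbol\gamma_\epsilon)\le\mathcal H(\overline{\boldsymbol\gamma})$, and finish with a subsequence-plus-uniqueness argument; your explicit estimate $\KL(\gamma_i\mid\xi_i)\le\log\lVert h_i\rVert_\infty$ simply makes the paper's finiteness claim quantitative.
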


\begin{proof}
The feasible set of plan tuples is weakly compact and convex.  Since each
$c_i$ is continuous, $\mathcal S_0$ is a nonempty compact convex subset of
that feasible set.  Moreover, $0\le\gamma_i\le h_i\xi_i$ with
$h_i\in L^\infty(\xi_i)$, so $\mathcal H$ is finite on the feasible set.
It is weakly lower semicontinuous and strictly convex on plan tuples.
Therefore it has a unique minimizer $\overline{\boldsymbol\gamma}$ on $\mathcal S_0$.

Let
$\boldsymbol\gamma_\epsilon
=(\gamma_{1,\epsilon},\ldots,\gamma_{p,\epsilon})$
be the regularized optimizer and write
\[
\mathcal C(\boldsymbol\gamma)
:=
\sum_{i=1}^p\lambda_i\int_{K\times K}c_i\d\gamma_i.
\]
Optimality gives
\[
\mathcal C(\boldsymbol\gamma_\epsilon)
+\epsilon\mathcal H(\boldsymbol\gamma_\epsilon)
\le
\mathcal C(\overline{\boldsymbol\gamma})
+\epsilon\mathcal H(\overline{\boldsymbol\gamma})
=
\CCBC+\epsilon\mathcal H(\overline{\boldsymbol\gamma}).
\]
Because relative entropy between probability measures is nonnegative,
$\ECCBC_{\epsilon,\eta}\ge\CCBC$.  In addition,
$\mathcal C(\boldsymbol\gamma_\epsilon)\ge\CCBC$ in the preceding
optimality inequality, which gives the upper bound in \eqref{eq:value-rate}
and also
\begin{equation}\label{eq:entropy-bound-regularized-plans}
\mathcal H(\boldsymbol\gamma_\epsilon)
\le
\mathcal H(\overline{\boldsymbol\gamma}).
\end{equation}

Let $\epsilon_n\downarrow0$.  Compactness gives a subsequence, not relabeled,
with $\boldsymbol\gamma_{\epsilon_n}\rightharpoonup\boldsymbol\gamma^*$.
The limiting tuple is feasible.  By \eqref{eq:value-rate} and continuity of
the transport costs,
$\mathcal C(\boldsymbol\gamma^*)=\CCBC$, so
$\boldsymbol\gamma^*\in\mathcal S_0$.  Lower semicontinuity of
$\mathcal H$ and \eqref{eq:entropy-bound-regularized-plans} yield
\[
\mathcal H(\boldsymbol\gamma^*)
\le
\liminf_{n\to\infty}
\mathcal H(\boldsymbol\gamma_{\epsilon_n})
\le
\mathcal H(\overline{\boldsymbol\gamma}).
\]
By uniqueness of the entropy minimizer,
$\boldsymbol\gamma^*=\overline{\boldsymbol\gamma}$.  Every weak cluster
point is therefore the same, which proves convergence of the entire family.
The convergence of the common second marginals follows by continuity of the
push-forward under the coordinate projection.
\end{proof}

\begin{remark}[Dependence on the reference measure]
\label{rem:selection-reference-dependence}
The entropy-selected tuple
$\overline{\boldsymbol\gamma}$ generally depends on the prescribed
support measure $\eta$, since the selecting functional is
\[
\mathcal H(\boldsymbol\gamma)
=
\sum_{i=1}^p
\lambda_i
\KL(\gamma_i\mid\nu_i\otimes\eta).
\]
Thus \Cref{thm:eccbc-zero-limit} gives a unique selected limit for
each fixed choice of $\eta$; it does not assert that the selected
unregularized optimizer is independent of the regularizing
reference.
\end{remark}

\section{Discussion and future directions}
\label{sec:discussion}

The capacity-constrained barycenter extends the classical Wasserstein
barycenter by placing an upper bound on each transport plan connecting
an input measure to the common barycenter.  This is useful when the
barycenter must be formed under local restrictions and not every
movement of mass is freely available.  A capacity may limit how much
mass can be transported between two regions, or it may completely
exclude certain connections.

Such restrictions arise naturally in problems involving transport
networks, logistics, resource allocation, and matching with quotas.
For example, a transport network may have limited flow on some routes,
a distribution system may have bounded supply between particular
locations, and a matching problem may restrict the number of
assignments between two groups.  In these settings, the ordinary
Wasserstein barycenter may use transport plans that are mathematically
optimal but practically impossible.  The capacity-constrained
barycenter provides an average that respects the prescribed
limitations.

The results of this paper show that adding these constraints does not
destroy the main mathematical structure of the barycenter problem.
Under the stated feasibility assumptions, a capacity-constrained
barycenter exists, and its optimal value has a dual representation.
The dual variables associated with the capacities help describe where
the restrictions influence the optimal transport plans.

Entropy regularization provides a more tractable approximation of the
constrained problem.  For every positive regularization parameter, the
regularized problem has a unique optimal tuple of transport plans.
Whenever the corresponding dual problem has a maximizer, the optimal
density has a clipped Gibbs form: it follows the usual exponential
formula where the capacity is inactive and equals the prescribed
capacity where the constraint is saturated.  This gives a direct way
to distinguish unrestricted transport regions from saturated regions.

The zero-regularization result also explains the relation between the
regularized and unregularized models.  As the regularization parameter
tends to zero, the regularized optimal plans converge to an optimal
tuple for the original capacity-constrained problem.  When the
unregularized problem has several optimal tuples, the regularization
selects the one with minimum relative entropy with respect to the
chosen reference measures.  This selected solution may depend on the
reference measure used in the entropy term.

Several questions remain open.  One important direction is to prove
existence and regularity of dual maximizers for the continuous
capacity-constrained entropy problem.  Another is to develop and
analyze numerical methods, such as capacity-aware scaling or
Sinkhorn-type procedures, based on the clipped Gibbs relation.
Numerical examples would also help illustrate how the barycenter
changes as the capacities are tightened or relaxed.

Further work may study stability with respect to changes in the input
measures and capacity measures, as well as quantitative convergence
rates as the regularization parameter tends to zero.  It would also be
useful to extend the theory to noncompact spaces, more general costs,
unbalanced transport, and multimarginal formulations.  Finally,
application-specific choices of the capacities and of the entropy
reference measure deserve a separate investigation.
\vspace{1cm}

\appendix
\textbf{Appendix}
\section{A second-moment estimate on \texorpdfstring{$\R^d$}{R\textasciicircum d}}\label{sec:appendix-moments}

The compact-domain proof of \Cref{thm:ccbc-existence} is particularly direct.  On
$\R^d$, one may instead use the following estimate from the original direct-method argument.

\begin{proposition}[Uniform second-moment bound]\label{prop:moment-bound}
Let $\mu\in\P_2(\R^d)$, let
$\{\nu_n\}_{n\in\N}\subset\P_2(\R^d)$, and let
$\widetilde\gamma\in\M_+(\R^d\times\R^d)$ be such that
$\Pi^{\widetilde\gamma}(\mu,\nu_n)\neq\varnothing$ for every $n$.  If
\[
\CCOT_{\frac12|x-y|^2}^{\widetilde\gamma}(\mu,\nu_n)
\le M
\qquad\text{for every }n,
\]
then
\[
\sup_n\int_{\R^d}|y|^2\d\nu_n(y)<\infty.
\]
\end{proposition}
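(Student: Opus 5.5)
For each $n$ we pick a plan $\gamma_n\in\Pi^{\widetilde\gamma}(\mu,\nu_n)$ whose cost is nearly optimal. By hypothesis the admissible set is nonempty and the constrained value is at most $M$, so we may take
\[
\int_{\R^d\times\R^d}\tfrac12\lvert x-y\rvert^2\d\gamma_n(x,y)\le M+1 .
\]
If the infimum is attained, the constant $M$ itself suffices, but the argument does not need attainment. The capacity bound $\gamma_n\le\widetilde\gamma$ plays no further role. The estimate uses only the marginal conditions and the cost bound, exactly as in the classical Wasserstein setting.

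The key step is the elementary pointwise inequality
\[
\lvert y\rvert^2\le 2\lvert x\rvert^2+2\lvert x-y\rvert^2
\qquad (x,y\in\R^d),
\]
which follows from $\lvert a+b\rvert^2\le2\lvert a\rvert^2+2\lvert b\rvert^2$ with $a=x$ and $b=y-x$. We integrate it against $\gamma_n$. Because $(\pi_2)_\#\gamma_n=\nu_n$ and $(\pi_1)_\#\gamma_n=\mu$, the two sides become integrals against the marginals:
\[
\int\lvert y\rvert^2\d\nu_n(y)
=\int\lvert y\rvert^2\d\gamma_n
\le 2\int\lvert x\rvert^2\d\mu(x)+4\int\tfrac12\lvert x-y\rvert^2\d\gamma_n
\le 2\int\lvert x\rvert^2\d\mu+4(M+1).
\]
The right-hand side is finite because $\mu\in\P_2(\R^d)$, and it does not depend on $n$. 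Taking the supremum over $n$ gives the claim.

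I expect no serious obstacle. The one point to check is that the marginal identities hold for nonnegative unbounded integrands such as $\lvert y\rvert^2$. This is standard for push-forwards: apply monotone convergence to truncations $\min\{\lvert y\rvert^2,k\}$, and every integrand involved is nonnegative. A second minor point is that near-optimal plans are used rather than optimal ones. This avoids needing existence of minimizers on the noncompact space $\R^d$.
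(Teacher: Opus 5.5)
Your proof is correct and is essentially the paper's: the paper's identity $\frac12|x-y|^2+\frac12|x|^2-\frac14|y|^2=|x-y/2|^2$ gives exactly your inequality $|y|^2\le 2|x|^2+2|x-y|^2$ divided by $4$, which is then integrated against admissible plans using the marginal conditions. The only cosmetic difference is that the paper integrates against an arbitrary admissible plan and then takes the infimum, so it obtains the bound $4M+2\int|x|^2\d\mu$ directly and never needs near-optimal plans or the extra $+1$.
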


\begin{proof}
The pointwise inequality
\[
-\frac12|x|^2+\frac14|y|^2
\le\frac12|x-y|^2
\]
follows from
$\frac12|x-y|^2+\frac12|x|^2-\frac14|y|^2
=|x-y/2|^2$.  Integrating it against any
$\gamma_n\in\Pi^{\widetilde\gamma}(\mu,\nu_n)$ gives
\[
\frac14\int_{\R^d}|y|^2\d\nu_n(y)
\le
\int_{\R^d\times\R^d}\frac12|x-y|^2\d\gamma_n(x,y)
+\frac12\int_{\R^d}|x|^2\d\mu(x).
\]
Taking the infimum over admissible $\gamma_n$ yields
\[
\frac14\int_{\R^d}|y|^2\d\nu_n(y)
\le
\CCOT_{\frac12|x-y|^2}^{\widetilde\gamma}(\mu,\nu_n)
+\frac12\int_{\R^d}|x|^2\d\mu(x),
\]
and the right-hand side is uniformly bounded.
\end{proof}

Combined with Chebyshev's inequality, Prokhorov's theorem, closure of the capacity inequalities, and weak lower semicontinuity, \Cref{prop:moment-bound} yields the corresponding existence argument on noncompact domains whenever the capacities and the feasible class permit the limiting passage.

\end{document}